\newcommand{\smfrac}[2]{{\textstyle \frac{#1}{#2}}}
\def\XXint#1#2#3{{\setbox0=\hbox{$#1{#2#3}{\int}$ }
\vcenter{\hbox{$#2#3$ }}\kern-.6\wd0}}
\def\b{\big}
\def\B{\Big}
\def\R{\mathbb{R}}
\def\N{\mathbb{N}}
\def\Z{\mathbb{Z}}
\def\dx{\,{\rm d}x}
\def\<{\langle}
\def\>{\rangle}
\def\D{\nabla}
\def\eps{\varepsilon}
\def\a{{\rm a}}
\def\c{{\rm c}}
\def\Rg{\mathcal{R}}
\def\E{\mathscr{E}}
\def\Ea{\E^\a}
\def\Ec{\E^\c}
\newcommand{\Y}{\mathscr{Y}}
\newcommand{\ignore}[1]{}
\newcommand{\Real}{\mathbb{R}}
\newcommand{\sstl}{single-species $2$-lattice}
\newcommand{\ml}[1]{{\bm{#1}}}
\newcommand{\mlunion}{\bigcup}
\newcommand{\simm}{\mathord{\sim}}
\theoremstyle{plain}
\newtheorem{lemma}{Lemma}[section]
\newtheorem{theorem}{Theorem}[section]
\newtheorem{proposition}{Proposition}[section]
\theoremstyle{definition}
\newtheorem{definition}{Definition}[section]
\theoremstyle{remark}
\newtheorem{remark}{Remark}[section]
\def\GL{{\rm GL}}
\def\Drg{D}  
\def\OmN{\Omega_N}
\def\YN{\Y_N}
\def\wt{\bar}
\def\OmC{{\Omega}}
\title[Symmetries of $2$-lattices and the Cauchy--Born Model]{Symmetries of $2$-lattices and second order accuracy of the Cauchy--Born Model}
\author{B. Van Koten}
\address{B. Van Koten\\ 127 Vincent Hall \\ 206 Church St. SE \\
  Minneapolis \\ MN 55455 \\ USA}
\email{vankoten@math.umn.edu}
\author{C. Ortner}
\address{C. Ortner\\ Mathematics Institute \\ Zeeman Building \\
  University of Warwick \\ Coventry CV4 7AL \\ UK}
\email{christoph.ortner@warwick.ac.uk}
\thanks{BVK was supported in part by DMS-0757355, DMS-0811039,
 the PIRE Grant OISE-0967140, the University of Minnesota
 Supercomputing Institute, and the
 Department of Energy under Award Number DE-SC0002085.  CO was
supported by EPSRC grant EP/H003096 ``Analysis
 of atomistic-to-continuum coupling methods''.}
\begin{document}

\begin{abstract}
  We show that the Cauchy--Born model of a single-species $2$-lattice
  is second order if the atomistic and continuum kinematics are
  connected in a novel way.  Our proof uses a generalization to
  $2$-lattices of the point symmetry of Bravais lattices.

  Moreover, by identifying similar symmetries in multi-species pair
  interaction models, we construct a new stored energy density, using
  shift-gradients but not strain gradients, that is also second order
  accurate.

  These results can be used to develop highly accurate continuum
  models and atomistic/continuum coupling methods for materials such
  as graphene, hcp metals, and shape memory alloys.
\end{abstract}

\maketitle

\section{Introduction}
The Cauchy--Born model is a widely used continuum model for crystal
elasticity~\cite{BornHuang:1954, Ortiz:1995a, BLBL:arma2002}.
Moreover, it is a crucial ingredient in a new class of
atomistic/continuum multi-scale methods
\cite{Ortiz:1995a,XiBe:2003,Shimokawa:2004,KlZi:2006,LuOrVK:2011}.
Formal considerations and rigorous analyses have shown that the
Cauchy--Born model for Bravais lattices (simple lattices) is
second order accurate
\cite{BLBL:arma2002,Hudson:stab,E:2007a,OrTh:pre}.  By contrast, one
expects that its generalization to multi-lattices should be only
first order accurate~\cite{E:2007a}, due to the absence of point
symmetry in general multi-lattices.

In the present work, we identify two non-trivial generalizations of
the Bravais lattice point symmetry.  We show that this leads to
second order accuracy of the classical Cauchy--Born model
for a single-species $2$-lattice, {\em provided} the atomistic and
continuum kinematics are connected in a novel way. Moreover, we
identify a new stored energy density for general multi-lattices
under pair interaction model,
and we show that this energy is also second order accurate.

While these are interesting observations on their own, they have
important consequences for computational materials modeling. For
example, our results provide higher-order continuum approximations
{\em without} requiring the use of $C^1$-conforming numerical
methods. (Higher order continuum models can always be constructed
provided they are discretized using higher-order conforming numerical
methods \cite{ArGr:2005}.) Our own interest in this issue is the
application of the Cauchy--Born model in atomistic/continuum coupling
methods. At the atomistic/continuum interface, the order of accuracy
is typically lower than in the continuum bulk, due to the loss of
interaction symmetry \cite{Dobson:2008b,Ort:2011,VKL:2011,OrtZha:pre}.
By employing a second order continuum approximation, the loss can be made less severe.
For example, in the blended quasicontinuum
method~\cite{VKL:2011}, the interfacial error is controlled in terms of
an approximation parameter $k$ called the blending width.  In 1D, it is
shown in~\cite{VKL:2011} that the leading order term in the error
decreases as $k^{-\frac{3}{2}}$ when the site energy has point
symmetry (see Definition~\eqref{eq:invsymm_1lat} below).  By contrast,
we show in~\cite{OrVK:blend2} that if the site energy does not have
point symmetry, then the error decreases as $k^{-\frac{1}{2}}$. In
higher dimensions, the loss in accuracy can be so severe that the
method may cease to be consistent \cite{OrVK:blend2}.  In addition,
the symmetries that we identify allow us to investigate ghost-force
removal techniques as discussed in \cite{Shimokawa:2004, OrtZha:pre},
which cannot be employed when site energies do not possess point
symmetry.

Our framework encompasses all single-species materials with
$2$-lattice structure and general multi-lattices modeled by pure pair
interactions.  Physical systems of interest that can be described
within this framework are hcp metals (e.g., Mg, Ti, Zn), the honeycomb
lattice (graphene) or the diamond cubic structures (e.g., diamond or
silicon). Common multi-species materials with multi-lattice structure
are shape memory alloys such as Ni-Ti, Fe-Ni, Ni-Al, which are often
modelled using only pair interactions \cite{KastAck:2009,
  GhutEll:2008}.

\subsection*{Outline}
In Section \ref{sec:atm}, we introduce the atomistic energies that we
consider, and the associated notation for atomistic kinematics.
In Section \ref{sec:cb}, we derive the classical Cauchy--Born energy for
simple and multi-lattices and prove first order accuracy of the energy
in the multi-lattice case and second order accuracy in the simple
lattice case. The second order result is based on the point symmetry
of Bravais lattices, which motivates the search for similar symmetries
in multi-lattices in Section \ref{sec:symm}. We identify two
non-trivial types of point symmetries in this section. In Section
\ref{sec:2d order 2latt}, we exploit the symmetry in single-species
$2$-lattices to prove second order accuracy of the classical
Cauchy--Born energy. This result does not extend immediately to the
case of multi-lattice pair interactions. Instead, in Section
\ref{sec:pair}, we construct a novel stored energy density for
which we are again able to exploit the symmetries discovered in
Section \ref{sec:symm} to prove second order accuracy.

For the sake of simplicity, we formulate all our results for
$2$-lattices. However, it is straightforward to see that the results
for pair interactions (and only these) extend to general
multi-lattices.


\section{Atomistic models for $2$-lattices}
\label{sec:atm}

\subsection{Atomistic kinematics}
For $d \in \{1,2,3\}$, a $d$-dimensional \emph{Bravais lattice}
(simple lattice or $1$-lattice)
is a set of the form $B\Z^d$ for some strain $B \in \GL(d)$.
A \emph{$d$-dimensional $2$-lattice} is a set of
the form
\begin{equation}\label{def: complex lattice}
   \b(B\Z^d + p_0\b) \mlunion \b(B\Z^d + p_1\b),
\end{equation}
for some \emph{shifts} $p = \{p_0, p_1\} \in \R^d \times \R^d$.

We call $\Z^d$ the {\em reference lattice}
and a point $\xi \in \Z^d$ a {\em site}.
A convenient index set for a $2$-lattice is
\begin{equation*}
  \Lambda := \Z^d \times \{0,1\},
\end{equation*}
which we call the \emph{reference list}.  The reference list serves a
purpose similar to the reference domain in elasticity.  We think of
the elements of the reference list as atoms or nuclei, and we call
$(\xi, \alpha)$ the \emph{atom of index $\alpha$ at site $\xi$}.

An {\em atomistic deformation} is a map $y: \Lambda \rightarrow \Real^d$.  We
will use the notation ${y_\alpha(\xi) := y(\xi, \alpha)}$ for
evaluations of functions defined on $\Lambda$, and we call
$y_\alpha(\xi)$ the \emph{deformed position of atom $(\xi, \alpha)$}.
To make our analysis as simple as possible, we impose \emph{periodic
  boundary conditions} on the set of deformations. Fix $N \in \N$.  We
call a map ${u: \Z^d \rightarrow \Real^d}$ an \emph{$N$-periodic
  displacement} if
\begin{equation*}
u_\alpha(\xi) = u_\alpha(\xi + N\eta) \mbox{ for all } \xi, \eta \in \Z^d, \, \alpha \in \{0,1\},
\end{equation*}
and we call $y$ an \emph{$N$-periodic deformation} if for some
$N$-periodic displacement $u$ and some strain $B \in \GL (d)$ we have
\begin{equation*}
y_\alpha(\xi)  = B\xi + u_\alpha(\xi).
\end{equation*}
Throughout the remainder of the paper, we will assume that all
deformations $y$ are $N$-periodic.

\begin{remark}[Index versus species]
  It is important for our purposes to distinguish between lattices
  that are composed of identical atoms and lattices that are composed
  of atoms of two or more species.  Thus, we will draw a careful
  distinction between the species of an atom and its index.  The
  \emph{species} is the type of atom, e.g. Cu, Zn, C.  The
  \emph{index} belongs to the set $\{0,1\}$, and it tells us which of
  the component Bravais lattices making up the $2$-lattice should be
  associated with the atom.  We assume that atoms of the same index
  must be of the same species.
\end{remark}

\subsection{Atomistic energies}\label{subsec:atomistic energies}
Let $y$ be an $N$-periodic deformation, and let $\Omega_N := \{0,1,
\dots, N-1\}^d$ be a periodic cell.  For a pair interaction model, the
atomistic energy takes the form
\begin{equation*}
\Ea(y) := \sum_{\xi \in \Omega_N} \left \{ \sum_{\alpha \in \{0,1\}}
\sum_{\substack{(\eta, \beta) \in \Lambda \\ (\eta, \beta) \neq (\xi , \alpha)}}
 \smfrac{1}{2} \phi_{\alpha\beta} (|y_\beta(\eta) - y_\alpha(\xi)|) \right \},
\end{equation*}
where the functions $\phi_{\alpha\beta} : (0,\infty) \rightarrow
\Real$ are \emph{pair potentials} that may depend on the species of
interacting atoms. If the material is composed of atoms of a single
species, then there should be no dependence of the potentials on
$\alpha$ and $\beta$. Examples of multi-lattice pair interaction
models describing interesting mechanics such as the shape-memory
effect are described in \cite{KastAck:2009, GhutEll:2008}

We call the inner sum
\begin{equation}
  \label{eq:site_en_pair}
\sum_{\alpha \in \{0,1\}}
 \sum_{\substack{(\eta, \beta) \in \Lambda \\ (\eta, \beta) \neq (\xi , \alpha)}}
 \smfrac{1}{2} \phi_{\alpha\beta} (|y_\beta(\eta) - y_\alpha(\xi)|)
\end{equation}
the \emph{site-energy at $\xi$}.
To simplify our analysis,
we will assume that $\phi_{\alpha \beta} \in C^3([0,\infty); \Real)$,
even though physically realistic potentials (such as the Lennard-Jones potential)
may have singularities at $0$.
We also assume that for some
\emph{cut-off radius} $r_c \in (0, \infty)$,
\begin{equation*}
\phi_{\alpha \beta} (r) = 0 \mbox{ whenever } r > r_c.
\end{equation*}
Thus, the sum defining the site-energy is finite as long as
$y(\Lambda)$ does not have an accumulation point. To avoid discussing
this purely technical point, we assume in the following that the
interaction range is finite in the reference configuration.

In our analysis, we will also allow a more general class of potentials
than pair interactions.  We only require that the total energy can be
decomposed into a sum of localised site-energies. To that end, let
\begin{equation*}
\Rg \subset \mathbb{Z}^d \times \{0,1\} \times \{0,1\}
\setminus \b\{(0; 0, 0), (0; 1,1) \b\}
\end{equation*}
be a finite {\em interaction range}.  Throughout the remainder of the
paper, $\ml{\rho} = (\rho; \alpha, \beta)$ will denote an element of
$\Rg$.  Given $\ml{\rho} \in \Rg$ and a deformation $y \in \Y$, we
define the {\em $2$-lattice finite difference}
\begin{equation*}
D_\ml{\rho} y(\xi) := y_\beta(\xi + \rho) - y_\alpha(\xi),
\end{equation*}
and the $\Rg$-tuple $D_\Rg y(\xi) := (D_\ml{\rho} y(\xi))_{\ml{\rho} \in \Rg}$.

We assume that the total atomistic energy takes the form
\begin{equation}\label{eq:general form of atomistic energy}
\Ea(y) = \sum_{\xi \in \Omega_N} V(D_\Rg y (\xi)),
\end{equation}
where $V: (\Real^d)^\Rg \rightarrow \Real$ is a \emph{site potential}.
In our analysis, we will assume that ${V \in C^3((\Real^d)^\Rg ;
  \Real)}$. Clearly, the pair interaction site-energy
\eqref{eq:site_en_pair} is of this form.

Moreover, typical EAM potentials for hcp metals \cite{BasJoh:1994} or
bond-angle and bond-order potentials for carbon structures (graphene)
\cite{Te:1988,Br:1990} can be written in this
form. However, it may be impossible to express potentials arising
directly from quantum mechanics or electronic structure models as
in~\eqref{eq:general form of atomistic energy}.

\begin{remark}
  Fixing the interaction range $\Rg$ in the {\em reference domain}, is
  justified by the fact that we consider only elastic effects in this
  paper.
\end{remark}

\section{The Cauchy--Born energy}
\label{sec:cb}
The Cauchy--Born energy is an elastic energy which provides a good
approximation of~\eqref{eq:general form of atomistic energy} for
deformations which are close to homogeneous (i.e., smooth)
\cite{BLBL:arma2002, E:2007a, OrTh:pre}.

\subsection{Continuum kinematics}
In continuum models for $2$-lattices,
the kinematic variables are
a \emph{deformation field} $Y \in C^1(\Real^d; \Real^d)$
and a \emph{shift field} $P \in C^0(\Real^d; \Real^d)$.
We say that a pair of fields $(Y,P)$ is $N$-periodic if, for some
macroscopic strain $B \in \GL(d)$,
\begin{equation*}
  P(x+N\eta) = P(x) \quad \text{and} \quad
  y(x+N\eta) = NB\eta + y(x) \quad \mbox{for all } \eta \in \Z^d.
\end{equation*}

\subsection{The Cauchy--Born energy for Bravais lattices}
We first review the Cauchy--Born approximation for Bravais lattices.
In this case, we may ignore the shifts,
and hence atomistic deformations are now maps from $\Z^d$ to $\R^d$,
and the interaction range $\Rg$ is a subset of $\Z^d$.
The continuum kinematic variable is just a single
$N$-periodic deformation field $Y \in C^1(\Real^d; \Real^d)$.

Set $\Omega := [0,1)^d$.  We observe that $N\Omega$ is a periodic cell
for an $N$-periodic continuum deformation.  The \emph{Cauchy--Born
  energy} (for Bravais lattices) takes the form
\begin{equation}\label{eq:conventional cauchy born}
  \Ec(Y) = \int_{N\Omega} W(\D Y) \, dx,
\end{equation}
where $W: \Real^{d \times d} \rightarrow \Real \cup \{+ \infty\}$ is
the \emph{Cauchy--Born strain energy density}. For $F \in \GL(d)$,
$W(F)$ is defined to be the atomistic energy per unit volume in the
lattice $F\Z^d$.  That is, for the atomistic deformation
$y^F(\xi) := F \xi$,
\begin{equation}\label{eq:cb density simple lattice}
  W(F) := \lim_{N \to\infty} N^{-d} \sum_{\xi \in \Omega_N} V\b(D_\Rg y^F(\xi)\b) = V\b(D_\Rg y^F (0) \b),
\end{equation}
since $D_\Rg y^F(\xi) = D_\Rg y^F(0) = \b\{F\rho \b\}_{\rho \in \Rg}$ for all $\xi \in \Z^d$.

Let $Y$ be a continuum deformation.
We observe that
\begin{equation}
  \label{eq:1latt_cb_dirder}
  W(\D Y(x)) = V(\D_\Rg
  Y(x)), \quad \text{and} \quad
  \Ec(Y) = \int_{N\OmC} V(\D_\Rg Y) \dx,
\end{equation}
where $\D_\Rg Y(x) := \b\{\D_\rho Y(x)\b\}_{\rho\in\Rg}$.

\subsection{The Cauchy--Born energy for $2$-lattices}
We now explain how the Cauchy--Born model is traditionally generalized
to \mbox{$2$-lattices}.
Let $Y$ and $P$ be $N$-periodic deformation and shift fields.
The \emph{Cauchy--Born energy} (for 2-lattices) takes the form
\begin{equation*}
  \Ec(Y, P) := \int_{N\OmC} W(\D Y, P) \dx,
\end{equation*}
where $W: \Real^{d \times d} \times (\Real^d)^n \rightarrow \Real \cup \{ + \infty\}$
is the \emph{Cauchy--Born strain energy density for $2$-lattices.}

As in the case of Bravais lattices, $W(F,P)$ is the energy per unit
volume in a lattice subjected to a homogeneous deformation with
strain $F$ and shift $P$.
That is, for the deformation $y^{F, P}$ defined by
$y^{F, P}_\alpha(\xi) := F\xi + \alpha P$, we have
\begin{equation}
  \label{eq:first formula for cb complex}
  W(F, P) := \lim_{N \rightarrow \infty} N^{-d} \sum_{\xi \in \Omega_N} V\b(D_\Rg y^{F,P} (\xi)\b)  =
  V\b(D_\Rg y^{F, P}(0) \b),
\end{equation}
since $D_\Rg y^{F,P} (\xi) = D_\Rg y^{F,P} (0)
= \b\{ F\rho + (\beta - \alpha) P \b\}_{\ml{\rho} \in \Rg}$
for all $\xi \in \Z^d$.
(Recall the convention $\ml{\rho} =(\rho; \alpha, \beta)$.)

We now generalize \eqref{eq:1latt_cb_dirder}. For $\ml{\rho} \in \Rg$,
 we define the \emph{$2$-lattice directional derivative} $\D_\ml{\rho}$ by
\begin{equation}
  \D_{\ml{\rho}} (Y,P)(x) := \D_\rho Y (x) + (\beta - \alpha)P(x),
\label{eq:2latt_dirder}
\end{equation}
and we set ${\D_\Rg (Y,P)(x)} := {\b\{\D_\ml{\rho} (Y,P)
  (x)\b\}_{\ml{\rho} \in \Rg}}$.  Thus, by~\eqref{eq:first formula for
  cb complex},
\begin{align}
W(\D Y, P) =& V(\D_\Rg (Y,P)(x)), \mbox{\quad and} \nonumber \\
\Ec(Y, P) =& \int_{N \OmC} V\b( \D_\Rg (Y,P)(x)\b) \dx.
\label{eq:Ec_multilatt_dirder}
\end{align}

\subsection{Error estimates for the Cauchy--Born energy}
\label{sec:err_energy}
In order to compare the atomistic and Cauchy--Born energies,
we must specify how to generate an atomistic deformation $y$
from a continuum deformation field $Y$ and shift field $P$. In this
section, we adopt the classical approach, however, we will
see in Section \ref{sec:2d order 2latt} that for a different
identification of the atomistic and continuum variables, the
Cauchy--Born energy can be a better approximation.

The Cauchy--Born energy accurately approximates the atomistic energy
when the deformation and shift fields are ``smooth'' or ``nearly
homogenous.''  To make this precise, we define a family of
increasingly smooth deformations $Y^N$ and shifts $P^N$.  Let $Y$ and
$P$ be fixed, $1$-periodic deformation and shift fields.  Define the
\emph{scaled} deformation and shift fields $Y^N$ and $P^N$ by
\begin{equation}
Y^N(x) := NY \left( \frac{x}{N} \right ) \mbox{\quad and \quad}
P^N(x) := P \left( \frac{x}{N} \right ),
\end{equation}
and define a corresponding atomistic deformation $y^N$ by
\begin{equation}
y^N_0 (\xi) := Y^N(\xi) \mbox{\quad and \quad}
y^N_1(\xi) := Y^N(\xi) + P^N(\xi).
\label{eq:definition of yN}
\end{equation}

Observe that $Y^N$, $P^N$, and $y^N$ are all $N$-periodic.
We adopt the convention that the atomistic energy of an $N$-periodic
deformation is always the energy of the periodic cell $\Omega_N$,
and that the Cauchy--Born energy is the energy of $N\Omega$;
that is,
\begin{align*}
\Ea\b(y^N\b) :=& \sum_{\xi \in \Omega_N} V\b(D_\Rg y^N(\xi)\b),
\mbox{\quad and } \\
 \Ec\b (Y^N, P^N \b) :=& \int_{N \Omega} W\b(\D Y^N, P^N \b) \, \dx.
\end{align*}

Since we have $\D Y^N(x) = \D Y\left(\frac{x}{N}\right)$,
the strain energy density satisfies
\begin{equation*}
W \b(\D Y^N(x), P^N(x) \b) = W \left (\D Y \left ( \frac{x}{N}\right), P\left(\frac{x}{N}\right)\right),
\end{equation*}
and so the Cauchy--Born energy has the scaling invariance
\begin{equation*}
N^{-d} \Ec\b(Y^N, P^N\b) = \Ec(Y,P).
\end{equation*}
This suggests that we should treat $\Ec(Y,P)$ as an approximation of
the atomistic energy per atom $N^{-d} \Ea\b(y^N\b)$. Indeed we show in
Proposition \ref{prop:convergence of cb nlattice} that
\begin{equation*}
  \lim_{N \rightarrow \infty} N^{-d}\Ea\b(y^N\b) = \Ec(Y,P).
\end{equation*}
That is, $\Ec(Y, P)$ may be understood as the {\em continuum limit} of
$N^{-d} \Ea\b(y^N\b)$ as $N \to \infty$; see~\cite{BLBL:arma2002}.
The proof is elementary, but we provide it nevertheless for comparison with our subsequent analysis.

\begin{proposition}[Convergence of Cauchy--Born energy for $2$-lattices]
  \label{prop:convergence of cb nlattice}
  Let $(Y,P)$ be a $1$-periodic deformation and shift field.
  Let $y^N$ be the scaled atomistic deformation
  defined by~\eqref{eq:definition of yN}.
  Then we have
  \begin{equation*}
    \b|N^{-d} \Ea \b(y^N\b) - \Ec(Y,P) \b| \leq  C N^{-1}
    \left\{ \|\D^2 Y\|_{\infty}
    + \|\D P \|_{\infty} \right\},
  \end{equation*}
  where the constant $C$ is a function of $\Rg$ and $\|V\|_{C^1}$.
\end{proposition}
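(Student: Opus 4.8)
The plan is to reduce the estimate to a sum of local errors, one per reference cell, and to control each local error by a first-order Taylor expansion of the site potential $V$. First I would write the difference as a telescoping/averaging identity: using the scaling invariance $N^{-d}\Ec(Y^N,P^N) = \Ec(Y,P)$ together with the representation \eqref{eq:Ec_multilatt_dirder}, one has
\begin{equation*}
  \Ec(Y,P) = N^{-d}\int_{N\Omega} V\b(\D_\Rg(Y^N,P^N)(x)\b)\dx
  = N^{-d}\sum_{\xi\in\Omega_N}\int_{\xi+\Omega} V\b(\D_\Rg(Y^N,P^N)(x)\b)\dx,
\end{equation*}
while $N^{-d}\Ea(y^N) = N^{-d}\sum_{\xi\in\Omega_N} V\b(D_\Rg y^N(\xi)\b)$. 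Hence the quantity to bound is
\begin{equation*}
  N^{-d}\sum_{\xi\in\Omega_N}\int_{\xi+\Omega}\Big[V\b(D_\Rg y^N(\xi)\b) - V\b(\D_\Rg(Y^N,P^N)(x)\b)\Big]\dx,
\end{equation*}
so it suffices to bound, uniformly in $\xi$ and in $x\in\xi+\Omega$, the integrand by $CN^{-1}\{\|\D^2 Y\|_\infty + \|\D P\|_\infty\}$.

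Next I would estimate, for each $\ml{\rho}=(\rho;\alpha,\beta)\in\Rg$, the difference between the atomistic finite difference and the continuum directional derivative. By definition, $D_\ml{\rho} y^N(\xi) = y^N_\beta(\xi+\rho) - y^N_\alpha(\xi)$, which by \eqref{eq:definition of yN} equals $Y^N(\xi+\rho) - Y^N(\xi) + (\beta-\alpha)P^N(\xi)$ (after noting that the shift terms contribute $\beta P^N(\xi+\rho) - \alpha P^N(\xi)$, and handling the $\beta\neq\alpha$ cases by a further expansion of $P^N$). Comparing with $\D_\ml{\rho}(Y^N,P^N)(x) = \D_\rho Y^N(x) + (\beta-\alpha)P^N(x)$ and Taylor-expanding $Y^N$ around $x$, one gets $\big|D_\ml{\rho}y^N(\xi) - \D_\ml{\rho}(Y^N,P^N)(x)\big| \le C\big(\|\D^2 Y^N\|_\infty + \|\D P^N\|_\infty\big)$ for $x\in\xi+\Omega$, with $C$ depending only on $\Rg$. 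Since $\D^2 Y^N(x) = N^{-1}\D^2 Y(x/N)$ and $\D P^N(x) = N^{-1}\D P(x/N)$, this is $\le CN^{-1}\{\|\D^2 Y\|_\infty + \|\D P\|_\infty\}$. Then I apply the mean value theorem to $V$ along the segment joining $D_\Rg y^N(\xi)$ and $\D_\Rg(Y^N,P^N)(x)$ in $(\Real^d)^\Rg$, bounding the integrand by $\|V\|_{C^1}$ times the above, and sum over $\xi$ and integrate over $\xi+\Omega$ to absorb the $N^{-d}$ prefactor.

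The main technical point — not a deep obstacle, but the place where care is needed — is the bookkeeping of the shift field: the clean identity $D_\ml{\rho} y^N(\xi) = \D_\rho Y^N(\xi) + (\beta-\alpha)P^N(\xi)$ only holds after recognizing that when $\beta\neq\alpha$ one must also Taylor-expand $P^N(\xi+\rho) - P^N(\xi)$, which contributes the $\|\D P\|_\infty$ term (this is precisely why the shift enters only through its first derivative, whereas $Y$ enters through its second — reflecting that $P$ plays the role of a zeroth-order, not first-order, kinematic quantity). One must also be mildly careful that $V\in C^3$ ensures the $C^1$ bound is finite on the relevant bounded set of arguments (bounded because $Y$, $P$ are $1$-periodic and $C^2$, $C^1$ respectively), and that the constant $C$ depends only on $\Rg$ and $\|V\|_{C^1}$ as claimed. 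Everything else is a routine triangle-inequality and summation argument.
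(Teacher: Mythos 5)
Your proposal is correct and follows essentially the same route as the paper's proof: a mean-value/first-order Taylor comparison of $V\b(D_\Rg y^N(\xi)\b)$ with $V\b(\D_\Rg (Y^N,P^N)\b)$, in which the residual $\beta\{P^N(\xi+\rho)-P^N(\xi)\} = O(\|\D P^N\|_\infty)$ (present whenever $\beta=1$, not only when $\beta\neq\alpha$) produces the $\|\D P\|_\infty$ term after rescaling. The only cosmetic difference is that you fold the quadrature step into the cell-by-cell comparison over $\xi+\Omega$, whereas the paper first invokes the midpoint rule to replace the integral by lattice-point values and then compares pointwise; the resulting estimates are identical.
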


\begin{proof}

Since the midpoint rule is exact for constant functions,
a standard quadrature estimate gives
\begin{align*}
N^{-d} \Ec\b(Y^N, P^N\b) 
&=  N^{-d} \int_{N\Omega} V\b(\D_\Rg \b(Y^N,P^N\b) \b) \, \dx \nonumber \\
&= N^{-d} \sum_{\xi \in \Omega_N} V\b(\D_\Rg \b(Y^N,P^N\b) \b) (\xi) \nonumber \\
&\quad +O\left(\|V\|_{C^1} \b\{\b\|\D^2 Y^N \b\|_\infty + \b\|\D P^N \b\|_\infty \b\} \right).
\end{align*}
Thus,
\begin{align}
Err(N) :=& N^{-d}\b \{ \Ea\b(y^N\b) - \Ec \b(Y^N,P^N \b) \b\} \nonumber \\
=& N^{-d}\sum_{\xi \in \Omega_N} V\b(D_\Rg y^N(\xi)\b)
- V\b(\D_\Rg \b(Y^N,P^N\b) \b) (\xi), \nonumber \\
&\quad+O\left(\|V\|_{C^1} \b\{\b\|\D^2 Y^N \b\|_\infty + \b\|\D P^N \b\|_\infty \b\} \right).
\label{eq:convergence 2lat 1}
\end{align}
By the mean value theorem,
\begin{align}
&\left |V \b(D_\Rg y^N(\xi)\b) - V \b( \D_\Rg \b(Y^N,P^N\b)(\xi) \b) \right | \nonumber \\
&\qquad \qquad \qquad \qquad \leq \sum_{\ml{\rho} \in \Rg}
\|V\|_{C^1} \b|D_\ml{\rho} y^N(\xi) - \D_\ml{\rho} \b(Y^N,P^N\b)(\xi) \b|,
\label{eq:convergence 2lat 2}
\end{align}
and using Taylor's theorem,
\begin{align}
D_\ml{\rho} y^N(\xi) - \D_\ml{\rho} \b(Y^N,P^N\b)(\xi)
&= y^N_\beta(\xi+\rho) - y^N_\alpha(\xi) \nonumber \\
&\quad - \D_\rho Y^N(\xi) +(\alpha - \beta) P^N(\xi)
\nonumber \\
&= Y(\xi + \rho) - Y(\xi) - \D_\rho Y^N(\xi) \nonumber \\
&\quad + \beta \b\{ P^N(\xi +\rho) -  P^N(\xi) \b\}
\nonumber \\
&= O\left(\b\|\D^2 Y^N \b\|_\infty + \b\|\D P^N \b\|_\infty \right).
\label{eq:convergence 2lat 3}
\end{align}

Combining~\eqref{eq:convergence 2lat 1}, \eqref{eq:convergence 2lat 2},
and~\eqref{eq:convergence 2lat 3} shows
\begin{equation*}
N^{-d}|\Ea(y^N) - \Ec(Y^N, P^N)|
\leq C \b\{\b\|\D^2 Y^N \b\|_\infty + \b\|\D P^N \b\|_\infty \b\}
\end{equation*}
where the constant $C$ is a function of $\| V\|_{C^1}$ and $\Rg$.
We now observe
\begin{equation*}
\b\|\D^2 Y^N \b\|_\infty = N^{-1} \|\D^2 Y\|_\infty
\mbox{\quad and \quad} \b\|\D P^N \b\|_\infty = N^{-1}\|\D P\|_\infty,
\end{equation*}
and the result follows.
\end{proof}

For Bravais lattices, the estimate given in
Proposition~\ref{prop:convergence of cb nlattice} can be improved if
we assume that the site potential $V$ has \emph{point
  symmetry}~\cite{Hudson:stab}:
\begin{equation}\label{eq:invsymm_1lat}
V(\ml{g}) = V(-\{g_{-\rho}\}_{\rho \in \Rg}) \mbox{ for all } \ml{g} \in (\Real^d)^\Rg.
\end{equation}
We show in Lemma~\ref{lem:inversion symmetry Bravais lattice} that
under physically reasonable assumptions, one can always take $V$ to be
point symmetric.  Let $R:\Real^d \rightarrow \Real^d$ defined by $Rx
=-x$ be \emph{point reflection in the origin}.
The first assumption is that
\begin{equation*}
  \Ea(y) = \Ea (y \circ R) \mbox{ for all } y \in \Y.
\end{equation*}
Physically, this assumption is motivated by the observation that
permutation or relabelling of the atoms does not change the energy,
and that Bravais lattices are invariant under the point inversion $R$,
which means that $R$ provides such a relabelling.

The second assumption is that
\begin{equation*}
\Ea(y) = \Ea(-y) \mbox{ for all } y \in \Y,
\end{equation*}
which is motivated by the principle that the energy should be
unchanged if the configuration of the atoms is reflected.

\begin{remark}
  Permutation invariance can be expected to hold whenever there is
  only one species of atoms.  If there is more than one species, then
  we can only assume that the energy is invariant under those
  permutations which preserve the species of the atoms. Hence, Lemma
  \ref{lem:inversion symmetry Bravais lattice} cannot be immediately
  applied to general multi-lattices.
\end{remark}

\begin{lemma}\label{lem:inversion symmetry Bravais lattice}
Assume that for all deformations $y$,
\begin{equation}
\label{eq:point sym total energy simple}
\Ea(y) = \Ea(- y \circ R),
\end{equation}
where $Rx = -x$; then
\begin{displaymath}
\Ea(y) = \sum_{\xi \in \OmN} \wt{V}\b(D_{\wt{\Rg}} y(\xi) \b),
\end{displaymath}
where $\wt{\Rg} = \Rg \cup -\Rg$,
and $\wt{V} : (\R^d)^{\wt{\Rg}} \to \R$ defined by
\begin{displaymath}
    \wt{V}\b( \{ g_{\ml\rho} \}_{\ml{\rho} \in \wt{\Rg}} \b)
    := \smfrac{1}{2} V\b( \{ g_{\ml\rho} \}_{\ml\rho\in \Rg} \b)
    + \smfrac{1}{2} V\b( \{- g_{- \ml\rho} \}_{\ml\rho \in -\Rg} \b),
\end{displaymath}
is point symmetric~\eqref{eq:invsymm_1lat}.
\end{lemma}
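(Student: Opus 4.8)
The plan is to \emph{symmetrize} the energy and then read off $\wt V$. Since $\Ea(y) = \Ea(-y\circ R)$ by~\eqref{eq:point sym total energy simple}, we may write $\Ea(y) = \tfrac12\Ea(y) + \tfrac12\Ea(-y\circ R)$, so it suffices to rewrite each of the two contributions as a sum over $\xi\in\OmN$ of a function of the enlarged stencil $D_{\wt\Rg}y(\xi)$.

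The decisive step is a bookkeeping identity for the reflected deformation $z := -y\circ R$, i.e.\ $z(\xi) = -y(-\xi)$. A direct computation gives, for each $\rho\in\Rg$,
\begin{equation*}
D_\rho z(\xi) = z(\xi+\rho) - z(\xi) = y(-\xi) - y(-\xi-\rho) = -D_{-\rho}y(-\xi).
\end{equation*}
Thus each entry of $D_\Rg z(\xi)$ is minus an entry of $D_{\wt\Rg}y(-\xi)$ (note $-\rho\in-\Rg\subseteq\wt\Rg$), so $V(D_\Rg z(\xi))$ depends only on $D_{\wt\Rg}y(-\xi)$. Summing over the periodic cell and substituting $\xi\mapsto-\xi$ --- a bijection of $\OmN$ up to the $N$-periodicity of the summand --- gives $\Ea(-y\circ R) = \sum_{\xi\in\OmN} V\bigl(\{-g_{-\ml{\rho}}\}_{\ml{\rho}\in-\Rg}\bigr)$ with $g = D_{\wt\Rg}y(\xi)$, which is exactly the second half of the claimed $\wt V$. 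Since trivially $\Ea(y) = \sum_{\xi\in\OmN} V\bigl(\{g_{\ml{\rho}}\}_{\ml{\rho}\in\Rg}\bigr)$ with $g = D_{\wt\Rg}y(\xi)$, averaging the two identities yields $\Ea(y) = \sum_{\xi\in\OmN}\wt V(D_{\wt\Rg}y(\xi))$ with $\wt V$ as stated.

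It then remains to verify that $\wt V$ obeys the point-symmetry relation~\eqref{eq:invsymm_1lat} over the stencil $\wt\Rg$. This is immediate from the formula: substituting $g' := -\{g_{-\ml{\rho}}\}_{\ml{\rho}\in\wt\Rg}$ into the definition, the label reflection $\ml{\rho}\mapsto-\ml{\rho}$ inside the finite difference composes with the sign flip so that the first half-term of $\wt V(g')$ becomes the second half-term of $\wt V(g)$ and vice versa; hence $\wt V(g') = \wt V(g)$.

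The only real difficulty here is notational rather than conceptual: one must carefully track how tuples carried by the index sets $\Rg$, $-\Rg$ and $\wt\Rg$ are fed into $V$, and check that the two independent reflections in play --- the label reflection $\rho\mapsto-\rho$ inside the finite difference and the site reflection $\xi\mapsto-\xi$ in the lattice sum --- compose correctly, so that $D_\Rg z(\xi)$ really does become a function of $D_{\wt\Rg}y(\xi)$ after reindexing. Everything else is elementary (periodicity of the summand, linearity of finite differences, and re-summation).
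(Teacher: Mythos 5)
Your proposal is correct and follows essentially the same route as the paper's own proof: the symmetrization $\Ea(y) = \tfrac12\Ea(y) + \tfrac12\Ea(-y\circ R)$, the identity $D_\rho(-y\circ R)(\xi) = -D_{-\rho}y(-\xi)$, and the re-indexing of the lattice sum via periodicity. The only step the paper makes explicit that you leave implicit is the preliminary check that $-y\circ R$ is again an admissible $N$-periodic deformation (so that the site-energy decomposition of $\Ea(-y\circ R)$ may be invoked), but this is a one-line verification.
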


\begin{proof}
  First, we note that, if $y$ is an $N$-periodic deformation then so
  is $-y \circ R$, since for all $\eta \in \Z^d$
\begin{equation*}
-y \circ R (\xi + N\eta) = -y(- \xi - N \eta ) = - y(-\xi) = - y \circ R (\xi).
\end{equation*}
By~\eqref{eq:point sym total energy simple}, we have
  \begin{align*}
    \Ea (y) &= \smfrac{1}{2} \Ea(y) + \smfrac{1}{2} \Ea( - y \circ R) \\
    &= \frac12 \sum_{\xi \in \OmN} V\b(D_\Rg y(\xi)\b) +
    \frac12 \sum_{\xi \in \OmN} V\b(- D_\Rg (y \circ R) (\xi)\b) \\
    &= \frac12 \sum_{\xi \in \OmN} V(D_\Rg y(\xi)) +
    \frac12 \sum_{\xi \in \OmN} V(- D_{-\Rg} y (- \xi)).
  \end{align*}
  The last equality follows since for $\rho \in \Rg$ we have
  \begin{align*}
    D_\rho y \circ R (\xi) &= y \circ R (\xi + \rho) - y \circ R(\xi) \\
    &= y(-\xi - \rho) - y(-\xi) \\
    &= D_{-\rho} y(-\xi).
  \end{align*}
  Employing periodicity of $y$, we can shift the second summation over
  $-\OmN$ back to $\OmN$ and upon relabelling, obtain
  \begin{align*}
    \Ea (y)
    &= \frac{1}{2}\sum_{\xi \in \OmN} V(\Drg y(\xi)) + \frac{1}{2}
    \sum_{\xi \in \OmN} V(- D_{-\Rg} y (\xi))  \\
    &= \sum_{\xi \in \OmN} \wt{V}(\wt{D}_{\wt{\Rg}} y(\xi)).
  \end{align*}
  Finally, point symmetry of $\wt\Rg$ and of $\wt{V}$ are obvious.
\end{proof}

Point symmetry of $V$ implies a symmetry of the partial derivatives of
$V$ in homogeneous states.  For $\rho \in \Rg$ and $\ml{g} \in
(\Real^d)^\Rg$, we define $V_\rho(\ml{g}) := \frac{\partial
  V}{\partial g_\rho} (\ml{g})$.  If $V$ is point symmetric, then we
have
\begin{equation}
\label{eq:symm of deriv 1lattice}
V_\rho (F\cdot\Rg) = - V_{-\rho} (F \cdot \Rg) \quad
\mbox{ for all } F \in \GL(d).
\end{equation}
To see this, one differentiates the identity \eqref{eq:invsymm_1lat}
with respect to $g_\rho$ and then evaluates it at $\ml{g} = F \cdot
\Rg$.

We can now prove the that the Cauchy--Born approximation for Bravais
lattices is second order accurate, which was previously observed in
\cite{BLBL:arma2002, Hudson:stab, MaSu:2011}. We nevertheless give a
complete proof of the result, since it motivates our subsequent
analysis for $2$-lattices.  For Bravais lattices, there is no reason
to make a distinction between the atomistic and continuum variables
since we will not consider multiple ways of relating the two.  Thus,
given a fixed $1$-periodic deformation $Y:\Real^d \rightarrow
\Real^d$, we let
\begin{equation}
  \label{eq:defn_YN_Bravais}
  Y^N(x) := NY \left ( \frac{x}{N} \right ),
\end{equation}
and we interpret $Y^N$ as both an atomistic and a continuum deformation.

\begin{proposition}[Convergence for Bravais lattices.]
\label{prop:error for simple cb}
Let $Y \in C^3(\Real^d;\Real^d)$ be a $1$-periodic deformation, and
let $Y^N$ be defined by \eqref{eq:defn_YN_Bravais}; then,
\begin{equation*}
  \b| N^{-d} \Ea\b(Y^N\b) - \Ec(Y) \b| \leq C N^{-2} \b \{\|\D^2 Y\|^2_\infty
+ \|\D^3 Y\|_\infty \b \},
\end{equation*}
where the constant $C$ is a function of $\Rg$ and $\|V\|_{C^2}$.
\end{proposition}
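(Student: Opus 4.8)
The plan is to separate the total error into a \emph{modeling error}, arising from replacing the atomistic finite difference $D_\rho Y^N(\xi) = Y^N(\xi+\rho) - Y^N(\xi)$ by the directional derivative $\D_\rho Y^N(\xi) = \D Y^N(\xi)\rho$, and a \emph{quadrature error}, arising from replacing the integral $\int_{N\Omega} V(\D_\Rg Y^N)\dx$ by the lattice sum $\sum_{\xi\in\Omega_N} V(\D_\Rg Y^N(\xi))$. Since $\Ea(Y^N) = \sum_{\xi\in\Omega_N} V(D_\Rg Y^N(\xi))$ and, by~\eqref{eq:1latt_cb_dirder} together with the scaling $\D Y^N(x) = \D Y(x/N)$, one has $\Ec(Y) = N^{-d}\int_{N\Omega} V(\D_\Rg Y^N)\dx$, the triangle inequality reduces the claim to bounding
\begin{align*}
\text{(I)} &:= N^{-d}\B|\sum_{\xi\in\Omega_N}\b\{ V(D_\Rg Y^N(\xi)) - V(\D_\Rg Y^N(\xi)) \b\}\B|, \\
\text{(II)} &:= N^{-d}\B|\sum_{\xi\in\Omega_N} V(\D_\Rg Y^N(\xi)) - \int_{N\Omega} V(\D_\Rg Y^N)\dx\B|.
\end{align*}
Throughout I would assume that $\Rg = -\Rg$ and that $V$ is point symmetric in the sense of~\eqref{eq:invsymm_1lat}; by Lemma~\ref{lem:inversion symmetry Bravais lattice} this is no loss of generality, since passing from $(\Rg, V)$ to $(\wt{\Rg}, \wt{V})$ changes neither $\Ea$ nor $\Ec$. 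In particular the derivative symmetry~\eqref{eq:symm of deriv 1lattice} is at our disposal.

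For (II), I would observe that $\sum_{\xi\in\Omega_N}$ is exactly the midpoint quadrature rule associated with the unit cells $\xi + [-\tfrac12,\tfrac12)^d$, which tile $N\Omega$ up to the $N$-periodicity of the integrand. Because the midpoint rule is exact for affine functions, a Taylor expansion of $x\mapsto V(\D_\Rg Y^N(x))$ about each cell centre $\xi$ has vanishing linear contribution over the origin-symmetric cell, leaving a per-cell error controlled by $\|\D^2[V(\D_\Rg Y^N)]\|_\infty$. Expanding this second derivative by the chain rule and inserting $\|\D^2 Y^N\|_\infty = N^{-1}\|\D^2 Y\|_\infty$ and $\|\D^3 Y^N\|_\infty = N^{-2}\|\D^3 Y\|_\infty$ gives $\|\D^2[V(\D_\Rg Y^N)]\|_\infty \le C(\Rg, \|V\|_{C^2}) N^{-2}\{\|\D^2 Y\|_\infty^2 + \|\D^3 Y\|_\infty\}$; multiplying by the $N^d$ cells and by $N^{-d}$ gives the required estimate for (II).

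For (I), Taylor's theorem gives $D_\rho Y^N(\xi) - \D_\rho Y^N(\xi) = \tfrac12\,\D^2 Y^N(\xi)[\rho,\rho] + O(N^{-2}\|\D^3 Y\|_\infty)$, the leading term being only of order $N^{-1}$, and a first order expansion of $V$ about $\D_\Rg Y^N(\xi)$ then yields
\begin{align*}
V(D_\Rg Y^N(\xi)) - V(\D_\Rg Y^N(\xi)) &= \tfrac12 \sum_{\rho\in\Rg} V_\rho(\D_\Rg Y^N(\xi))\cdot \D^2 Y^N(\xi)[\rho,\rho] \\
&\quad + O\b(\|V\|_{C^2} N^{-2}\{\|\D^2 Y\|_\infty^2 + \|\D^3 Y\|_\infty\}\b),
\end{align*}
the error term absorbing the second order Taylor remainder of $V$ (of order $\|V\|_{C^2}\|\D^2 Y^N\|_\infty^2$) and the contribution of the $O(N^{-2})$ correction in the finite difference. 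The decisive point is that the displayed leading sum vanishes identically: writing $\D_\Rg Y^N(\xi) = F\cdot\Rg$ with $F = \D Y^N(\xi)$, the symmetry~\eqref{eq:symm of deriv 1lattice} gives $V_\rho(F\cdot\Rg) = -V_{-\rho}(F\cdot\Rg)$, so relabelling $\rho \mapsto -\rho$ and using $\D^2 Y^N(\xi)[-\rho,-\rho] = \D^2 Y^N(\xi)[\rho,\rho]$ shows the sum equals minus itself. This cancellation is the Bravais-lattice counterpart of the origin-symmetric-cell cancellation used for (II), it is exactly where point symmetry is indispensable, and it is the step I expect to be the main obstacle: without it (I) is genuinely only of order $N^{-1}$, which is precisely why the Cauchy--Born model is in general only first order for multi-lattices. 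Granting the cancellation, $|V(D_\Rg Y^N(\xi)) - V(\D_\Rg Y^N(\xi))| \le C(\Rg,\|V\|_{C^2}) N^{-2}\{\|\D^2 Y\|_\infty^2 + \|\D^3 Y\|_\infty\}$ uniformly in $\xi$; summing over the $N^d$ sites in $\Omega_N$ and dividing by $N^d$ bounds (I), and combining with the estimate for (II) completes the proof, with $C$ a function of $\Rg$ and $\|V\|_{C^2}$ only.
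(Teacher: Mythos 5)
Your proposal is correct and follows essentially the same route as the paper's proof: the same midpoint-rule quadrature estimate for the continuum integral, the same Taylor expansions of $V$ and of the finite differences, and the same cancellation of the leading $O(N^{-1})$ term via the derivative symmetry \eqref{eq:symm of deriv 1lattice}. The only difference is presentational: you make explicit the point-symmetry hypothesis on $V$ (justified by Lemma~\ref{lem:inversion symmetry Bravais lattice}), which the paper leaves implicit, and you package the two error contributions as a triangle-inequality decomposition rather than combining them in a single chain of identities.
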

\begin{proof}
Since the midpoint rule is exact for affine functions, a standard
quadrature estimate gives
\begin{align}
N^{-d} \Ec\b(Y^N\b) &= N^{-d} \int_{N\Omega} V \b(\D_\Rg Y^N \b) \, \dx \nonumber \\
&= N^{-d}\sum_{\xi \in \Omega_N} V\b(\D_\Rg Y^N(\xi)\b) \nonumber \\
&\quad + O \left ( \b\|\D^2 V\b(\D_\Rg Y^N\b) \b\|_\infty \right ).
\label{eq:convergence 1lat 1}
\end{align}
Set $V_\rho(\ml{g}) := \frac{\partial}{\partial g_\rho} V(\ml{g})$,
and let $V_\rho(\xi) := V_\rho\b(\D_\Rg Y^N (\xi) \b)$.
By Taylor's theorem,
\begin{equation}
\begin{split}
V\b(D_\Rg Y^N(\xi)\b) - V\b(\D_\Rg Y^N(\xi)\b) &=
\sum_{\rho \in \Rg} V_\rho (\xi)\cdot \b\{D_\rho Y^N(\xi) - \D_\rho Y^N(\xi) \b\} \\
&+ O(\|V\|_{C^2} \b|D_\rho Y^N (\xi) - \D_\rho Y^N(\xi) \b |^2),
\end{split}
\label{eq:convergence 1lat 2}
\end{equation}
and
\begin{align}
D_\rho Y^N(\xi) - \D_\rho Y^N(\xi)
&= Y^N(\xi + \rho) - Y^N (\xi) - \D_\rho Y^N(\xi) \nonumber \\
&= \frac{1}{2} \D^2_\rho Y^N (\xi) + O\b(\b\|\D^3 Y^N\b\|_\infty \b).
\label{eq:convergence 1lat 3}
\end{align}
Combining~\eqref{eq:convergence 1lat 2} and~\eqref{eq:convergence 1lat 3}, we have
\begin{equation}
\begin{split}
V\b(D_\Rg Y^N(\xi)\b) - V\b(\D_\Rg Y^N\b) &=
\frac{1}{2}\sum_{\rho \in \Rg} V_\rho (\xi) \cdot  \D_\rho^2 Y^N(\xi) \\
&\quad
+ O\left(\b\{ \b\|\D^2 Y^N\|^2_\infty + \b\|\D^3 Y^N\|_\infty \b\} \right)
\label{eq:convergence 1lat 4}
\end{split}
\end{equation}
By symmetry of the partial derivatives of $V$~\eqref{eq:symm of deriv 1lattice},
the first term on the right hand side of~\eqref{eq:convergence 1lat 4} vanishes.
We have
\begin{align*}
  \sum_{\rho\in\Rg} V_\rho \cdot \D_\rho^2 Y^N &= \frac12
  \sum_{\rho\in\Rg} \B( V_\rho \cdot \D_\rho^2 y^N + V_{-\rho} \cdot
  \D_{-\rho}^2 Y^N \B) \\
  &= \frac12   \sum_{\rho\in\Rg} \B( V_\rho \cdot \D_\rho^2 Y^N - V_{\rho} \cdot
  \D_{\rho}^2 Y^N \B) = 0,
\end{align*}
where all functions above are evaluated at $\xi \in \Omega_N$.
Therefore, combining~\eqref{eq:convergence 1lat 1},\eqref{eq:convergence 1lat 2},
and~\eqref{eq:convergence 1lat 4} gives
\begin{equation*}
N^{-d} \b| \Ea\b(Y^N\b) - \Ec\b(Y^N\b) \b|
 \leq C \b\{ \b\|\D^2 Y^N \b \|^2_\infty + \b\|\D^3 Y^N \b \|_\infty \b\},
\end{equation*}
where the constant $C$ depends on $\Rg$ and $\|V\|_{C^2}$.
Finally, we observe that
\begin{equation*}
\b\|\D^2 Y^N \b \|^2_\infty = N^{-2} \b\|\D^2 Y \b \|^2_\infty
\mbox{\quad and \quad} \b\|\D^3 Y^N \b \|_\infty = N^{-2}\b\|\D^3 Y \b \|_\infty,
\end{equation*}
and the result follows.
\end{proof}

\section{Symmetries of the $2$-lattice site energy}
\label{sec:symm}
We showed in the previous section how the point symmetry
\eqref{eq:invsymm_1lat} of Bravais lattices implies second order accuracy of
the Cauchy--Born energy in the continuum limit.
At first glance, $2$-lattices do not possess a point symmetry: e.g.,
inversion of the honeycomb lattice about a lattice point yields a
shifted honeycomb lattice; see Figure \ref{fig:sym_carbon}. Note,
however, that inverting the lattice about the center of an edge leaves
it invariant.

\begin{figure}[t]
  \includegraphics[width=7cm]{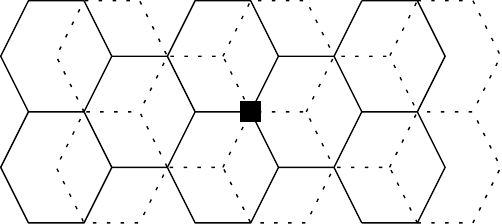}
  \caption{\label{fig:sym_carbon} Point reflection of the honeycomb
    lattice about a lattice site yields a shifted honeycomb lattice.}
\end{figure}

The purpose of this section is to explore how this and other
symmetries may be exploited to extend~\eqref{eq:invsymm_1lat}.  We
will identify two such non-trivial extensions: for the case of a
single species $2$-lattice with a general site energy, and for the
case of a multi-lattices with pair interaction energy (however, we
will only formulate the result for $2$-lattices). In Sections
\ref{sec:2d order 2latt} and \ref{sec:pair}, we will then exploit these
observations to derive ``simple'' second order continuum models.

\subsection{Symmetries for single-species $2$-lattices}
\label{sec:symm_1}
\begin{figure}[t]
  \includegraphics[width=12cm]{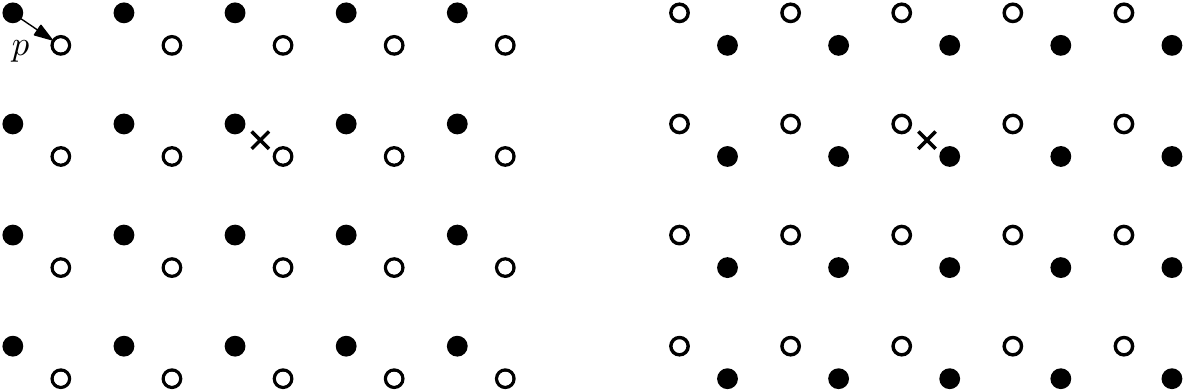}
  \caption{\label{fig:symm_2latt} Point reflection of a 2-lattice
    about the ``centroid'' ${\sf x}$ of a lattice site,
    containing $\{\circ, \bullet\}$. The lattice remains invariant, however
    the atom indices are reversed. If the atoms are of the same
    species, then the physical configuration remains invariant.}
\end{figure}
Any single-species $2$-lattice has a reflection symmetry about the
``centroid'' of a lattice site; see Figure \ref{fig:symm_2latt}.
Algebraically, let ${B \in \GL(d)}$, ${p \in \Real^d} \setminus B
\Z^d$, and let $\mathscr{L} := B \mathbb{Z}^d \, \mlunion \, (B
\mathbb{Z}^d + p)$ be a \sstl. Equivalently, we may shift the lattice
by $-p/2$ to redefine it as
\begin{equation*}
  \mathscr{L} := \b(B \mathbb{Z}^d - \smfrac{p}{2} \b) \, \mlunion \, \b(B
\mathbb{Z}^d + \smfrac{p}{2}\b),
\end{equation*}
which immediately reveals the symmetry
\begin{equation}
  \label{eq:2latt_inv_symm}
  - \mathscr{L} = \mathscr{L}
\end{equation}
Note, however, that if $\mathscr{L}$ has two species, then this
operation reverses the species and is therefore {\em not} a symmetry
of a $2$-species $2$-lattice.

The symmetry \eqref{eq:invsymm_1lat} can also be thought of as a
permutation of the reference list: $R(\rho; \alpha) := (- \rho; \neg
\alpha)$ where $\neg 0 := 1$ and $\neg 1 := 0$, then $R\Lambda =
\Lambda$. We can also define an analogous operation on interactions
$\neg : \mathbb{Z}^d \times \{0,1\} \times \{0,1\} \rightarrow
\mathbb{Z}^d \times \{0,1\} \times \{0,1\}$,
\begin{equation}
  \label{eq:defn_neg}
  \neg (\rho; \alpha, \beta)  := (-\rho;\neg \alpha, \neg \beta).
\end{equation}
The following Proposition provides a $2$-lattice analogy to the
observation that ${D_\rho y^F = -D_{-\rho}y^F}$ in
Bravais lattices, which was a key step to obtain second order
accuracy of the Bravais lattice Cauchy--Born energy.

\begin{proposition}
  \label{th:symm_1:refl}
  Let $F \in \GL(d)$ and $p  = (p_0, p_1) \in \R^{2d}$, then
  \begin{equation*}
    D_{\ml{\rho}} y^{F,p} = - D_{\neg \ml{\rho}} y^{F,p}
    \qquad \forall \ml{\rho} \in \Rg.
  \end{equation*}
\end{proposition}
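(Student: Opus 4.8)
The plan is to compute $D_{\ml{\rho}} y^{F,p}$ directly from the definitions and then recognise the right-hand side as $-D_{\neg\ml{\rho}} y^{F,p}$ after a relabelling of the two sublattices. Recall that for $\ml{\rho} = (\rho;\alpha,\beta)$ we have $D_{\ml{\rho}} y(\xi) = y_\beta(\xi+\rho) - y_\alpha(\xi)$, and that the homogeneous deformation is given by $y^{F,p}_\alpha(\xi) = F\xi + p_\alpha$ (here using the two-shift convention $p = (p_0,p_1)$, so that $y^{F,p}_\alpha(\xi) = F\xi + p_\alpha$; note this is consistent with the earlier single-shift convention upon setting $p_0 = 0$, $p_1 = P$). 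First I would evaluate
\begin{equation*}
  D_{\ml{\rho}} y^{F,p}(\xi) = \b(F(\xi+\rho) + p_\beta\b) - \b(F\xi + p_\alpha\b) = F\rho + (p_\beta - p_\alpha),
\end{equation*}
which is independent of $\xi$, exactly as in \eqref{eq:first formula for cb complex}.

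Next I would apply the same computation to $\neg\ml{\rho} = (-\rho;\neg\alpha,\neg\beta)$, obtaining
\begin{equation*}
  D_{\neg\ml{\rho}} y^{F,p}(\xi) = F(-\rho) + (p_{\neg\beta} - p_{\neg\alpha}) = -F\rho + (p_{\neg\beta} - p_{\neg\alpha}).
\end{equation*}
Since $\{\neg\alpha,\neg\beta\} = \{\alpha,\beta\}$ as a pair with the roles swapped, we have $p_{\neg\beta} - p_{\neg\alpha} = -(p_\beta - p_\alpha)$: indeed, whatever the values of $\alpha,\beta \in \{0,1\}$, negating both indices exchanges $p_0 \leftrightarrow p_1$ in the difference $p_\beta - p_\alpha$, which flips its sign. (If $\alpha = \beta$ the difference is zero and the claim is trivial; the interesting case is $\alpha \neq \beta$, where $p_\beta - p_\alpha = \pm(p_1 - p_0)$ and negation reverses the sign.) Therefore
\begin{equation*}
  D_{\neg\ml{\rho}} y^{F,p}(\xi) = -F\rho - (p_\beta - p_\alpha) = -\b(F\rho + (p_\beta - p_\alpha)\b) = -D_{\ml{\rho}} y^{F,p}(\xi),
\end{equation*}
which is the desired identity.

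This argument is essentially a bookkeeping exercise, so I do not expect a genuine obstacle; the only point requiring care is the sign-flip $p_{\neg\beta} - p_{\neg\alpha} = -(p_\beta - p_\alpha)$, which must be checked against whichever shift convention is in force (one shift $P$ with the factor $(\beta-\alpha)$, as in \eqref{eq:2latt_dirder}, versus two shifts $p_0, p_1$). In the $(\beta - \alpha)$ formulation it is even more transparent: $D_{\ml\rho} y^{F,P} = F\rho + (\beta-\alpha)P$ and $D_{\neg\ml\rho} y^{F,P} = -F\rho + (\neg\beta - \neg\alpha)P = -F\rho - (\beta - \alpha)P$, since $\neg\beta - \neg\alpha = (1-\beta) - (1-\alpha) = \alpha - \beta = -(\beta-\alpha)$. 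Summing the two contributions and negating gives the claim immediately, and since the expressions are independent of $\xi$ the identity holds as an identity of functions on $\Z^d$, as stated.
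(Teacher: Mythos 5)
Your proof is correct and follows essentially the same route as the paper's: a direct evaluation of both finite differences for the homogeneous deformation $y^{F,p}_\alpha(\xi) = F\xi + p_\alpha$, reduced to the sign-flip identity $p_{\neg\beta} - p_{\neg\alpha} = -(p_\beta - p_\alpha)$, which the paper likewise verifies by case distinction. The only cosmetic difference is that the paper evaluates at $\xi = 0$, exploiting $\xi$-independence, whereas you carry a general $\xi$ throughout.
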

\begin{proof}
  Let $y = y^{F,p}$ and $\ml{\rho} = (\rho; \alpha, \beta) \in \Rg$,
  then
  \begin{align*}
    - D_{\neg \ml{\rho}} y(0) =\,& - \b( y_{\neg\beta}(-\rho) -
    y_{\neg\alpha}(0) \b)\\
    =\,& - \b( F (-\rho) + p_{\neg\beta} \b) + \b( p_{\neg\alpha} \b)
    \\
    =\,& F \rho + p_{\neg\alpha} - p_{\neg\beta}
  \end{align*}
  One readily checks by case distinction that $p_{\neg\alpha} -
  p_{\neg\beta} = p_\beta - p_\alpha$, which concludes the proof.
\end{proof}

\begin{definition}[Point symmetry for a single-species
  $2$-lattice]
  We say that the site potential $V$ is \emph{point symmetric
    (with respect to $\neg$)} if $\Rg = \neg \Rg$ and
  \begin{equation}
    \label{eq:defn_invsymm_1}
    V\b( \{ g_{\ml{\rho}} \}_{\ml{\rho} \in \Rg}\b)
    = V\b(- \{ g_{\neg \ml{\rho}} \}_{\ml{\rho} \in \Rg} \b) \qquad
    \forall \{ g_{\ml{\rho}} \}_{\ml{\rho} \in \Rg} \in (\R^d)^\Rg.
  \end{equation}
\end{definition}

We show next that, under physically realistic assumptions (invariance
of the energy under permutations and isometries) it is always possible
to define a point symmetric site potential for a \sstl.

\begin{proposition} \label{lem:symmetrization for sstl} Suppose that
  $\Ea$ given by \eqref{eq:general form of atomistic energy} is
  invariant under the permutation $R$ of the index list, and under the
  isometry $x \mapsto -x$:
  \begin{equation}    \label{eq:isotropy of total energy sstl}
    \Ea (y) = \Ea(y \circ R) = \Ea(-y\circ R);
  \end{equation}
  then,
  \begin{displaymath}
    \Ea(y) = \sum_{\xi \in \OmN} \wt{V}\b( D_{\wt{\Rg}} y(\xi) \b),
  \end{displaymath}
  where $\wt{\Rg} = \Rg \cup \neg\Rg$, and $\wt{V} : (\R^d)^{\wt{\Rg}}
  \to \bar\R$,
  \begin{displaymath}
    \wt{V}\b( \{ g_{\ml\rho} \}_{\ml{\rho} \in \wt{\Rg}} \b)
    := \smfrac{1}{2} V\b( \{ g_{\ml\rho} \}_{\ml\rho\in \Rg} \b)
    + \smfrac{1}{2} V\b( \{- g_{\neg \ml\rho} \}_{\ml\rho \in \neg\Rg} \b),
  \end{displaymath}
  is point symmetric \eqref{eq:defn_invsymm_1}.
\end{proposition}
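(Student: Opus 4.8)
The plan is to mimic the proof of Lemma~\ref{lem:inversion symmetry Bravais lattice}, with the point reflection $Rx=-x$ there replaced by the index permutation $R(\xi;\alpha)=(-\xi;\neg\alpha)$ and the direction reversal $\rho\mapsto-\rho$ replaced by the involution $\neg$ of~\eqref{eq:defn_neg}. Of the three equalities in the hypothesis~\eqref{eq:isotropy of total energy sstl} only the consequence $\Ea(y)=\Ea(-y\circ R)$ will actually be used.

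First I would check that $-y\circ R$ is again an admissible ($N$-periodic) deformation: writing $y_\alpha(\xi)=B\xi+u_\alpha(\xi)$ with $u$ an $N$-periodic displacement, one has $(-y\circ R)_\alpha(\xi)=B\xi-u_{\neg\alpha}(-\xi)$, and $\xi\mapsto -u_{\neg\alpha}(-\xi)$ is $N$-periodic; hence $\Ea(-y\circ R)=\sum_{\xi\in\OmN}V\b(D_\Rg(-y\circ R)(\xi)\b)$ is meaningful via~\eqref{eq:general form of atomistic energy}. The central step is then the elementary difference identity: for $\ml{\rho}=(\rho;\alpha,\beta)\in\Rg$ and any $\xi$,
\[
  D_{\ml{\rho}}(-y\circ R)(\xi) \;=\; -\b(y_{\neg\beta}(-\xi-\rho)-y_{\neg\alpha}(-\xi)\b) \;=\; -D_{\neg\ml{\rho}}\,y(-\xi),
\]
which is the inhomogeneous counterpart of Proposition~\ref{th:symm_1:refl} and plays exactly the role that $D_\rho(y\circ R)(\xi)=D_{-\rho}y(-\xi)$ plays in Lemma~\ref{lem:inversion symmetry Bravais lattice}.

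Given this, I would write $\Ea(y)=\tfrac12\Ea(y)+\tfrac12\Ea(-y\circ R)$, insert the two energy representations, and then change the summation variable $\xi\mapsto-\xi$ in the term coming from $\Ea(-y\circ R)$ — legitimate because the relevant summand, viewed as a function of its argument, is $N$-periodic and $-\OmN$ is again a complete residue system modulo $N\Z^d$. This yields
\[
  \Ea(y) \;=\; \tfrac12\sum_{\xi\in\OmN}V\b(D_\Rg y(\xi)\b) \;+\; \tfrac12\sum_{\xi\in\OmN}V\b(\{-D_{\neg\ml{\rho}}y(\xi)\}_{\ml{\rho}\in\Rg}\b),
\]
and, after the standard index relabelling (as in Lemma~\ref{lem:inversion symmetry Bravais lattice}), the right-hand side is precisely $\sum_{\xi\in\OmN}\wt V\b(D_{\wt\Rg}y(\xi)\b)$ with $\wt\Rg=\Rg\cup\neg\Rg$ and $\wt V$ as in the statement. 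One should also note here that $\wt\Rg$ is still a legitimate interaction range, i.e.\ excludes $(0;0,0)$ and $(0;1,1)$, because $\neg$ merely interchanges those two elements. Finally, $\neg\wt\Rg=\neg\Rg\cup\Rg=\wt\Rg$ since $\neg$ is an involution, and substituting $g_{\ml{\rho}}\mapsto -g_{\neg\ml{\rho}}$ into the defining formula for $\wt V$ interchanges its two summands (using $\neg\neg=\id$), which is exactly the point symmetry~\eqref{eq:defn_invsymm_1}.

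There is no genuine conceptual obstacle — the argument is a transcription of Lemma~\ref{lem:inversion symmetry Bravais lattice} — and the only thing requiring care is the bookkeeping with $\neg$: in particular, making sure in the penultimate step that, after the periodic shift, the reversed-and-negated stencil $\{-D_{\neg\ml{\rho}}y(\xi)\}_{\ml{\rho}\in\Rg}$ is fed into $V$ through the same relabelling convention that defines the second summand of $\wt V$, and keeping the index list $\Rg$ and the site-energy index set consistent throughout.
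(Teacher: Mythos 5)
Your proposal is correct and follows essentially the same route as the paper's own proof: both verify that $-y\circ R$ is an admissible $N$-periodic deformation, use the splitting $\Ea(y)=\smfrac{1}{2}\Ea(y)+\smfrac{1}{2}\Ea(-y\circ R)$ together with the identity $D_{\ml{\rho}}(y\circ R)(\xi)=D_{\neg\ml{\rho}}y(-\xi)$, and then shift the summation back to $\OmN$ by periodicity before reading off $\wt{V}$. Your extra remarks (that only $\Ea(y)=\Ea(-y\circ R)$ is used, and that $\neg$ merely swaps $(0;0,0)$ and $(0;1,1)$ so $\wt{\Rg}$ remains a valid interaction range) are accurate but do not change the argument.
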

\begin{proof}
  Fix $y \in \YN$ and let $\wt{\Rg}, \wt{V}$ be defined as above. We
  first show that $y' := - y \circ R \in \YN$. To that end we must
  show that the associated displacement $u' := y' - y^{B,0}$ is
  $N$-periodic. To see this, let $u := y - y^{B,0}$, then $u$ is
  $N$-periodic and we have
  \begin{align*}
    u_\alpha'(\xi)
    &= - (y\circ R)_\alpha(\xi) - B \xi
    = - y_{\neg\alpha}(-\xi) - B \xi
    \\
    &= - u_{\neg\alpha}(-\xi) + B \xi - B \xi = - u_{\neg\alpha}(-\xi).
  \end{align*}
  Since $u_{\neg\alpha}$ is $N$-periodic it follows that $u'_\alpha$
  is $N$-periodic.

  The rest of the proof is analogous to the proof of Proposition
  \ref{lem:inversion symmetry Bravais lattice}. Employing
  \eqref{eq:isotropy of total energy sstl}, we have
  \begin{align*}
    \Ea (y) &= \smfrac{1}{2} \Ea(y) + \smfrac{1}{2} \Ea( - y \circ R) \\
    &= \frac12 \sum_{\xi \in \OmN} V\b(D_\Rg y(\xi)\b) +
    \frac12 \sum_{\xi \in \OmN} V\b(- D_\Rg (y \circ R) (\xi)\b) \\
    &= \frac12 \sum_{\xi \in \OmN} V(D_\Rg y(\xi)) +
    \frac12 \sum_{\xi \in \OmN} V(- D_{\neg \Rg} y (- \xi)).
  \end{align*}
  The last equality follows since for $\ml{\rho} = (\rho; \alpha,
  \beta) \in \Rg$ we have
  \begin{align*}
    D_\ml{\rho} y \circ R (\xi) &= y \circ R (\xi+\rho, \beta) - y \circ R (\xi, \alpha)  \\
    &= y(-\xi - \rho, \neg \beta) - y(-\xi, \neg \alpha) \\
    &= D_{\neg \ml{\rho}} y(-\xi).
  \end{align*}

  Since $y - y^{B,0}$ is periodic, we can shift the second summation
  over $-\OmN$ back to $\OmN$ and, upon relabelling the summation
  variable, obtain
  \begin{align*}
    \Ea (y)
    &= \frac{1}{2}\sum_{\xi \in \OmN} V(\Drg y(\xi)) + \frac{1}{2}
    \sum_{\xi \in \OmN} V(- D_{\neg \Rg} y (N e - \xi))  \\
    &= \frac{1}{2}\sum_{\xi \in \OmN} V(\Drg y(\xi)) + \frac{1}{2}
    \sum_{\xi \in \OmN} V(- D_{\neg \Rg} y (\xi))   \\
    &= \sum_{\xi \in \OmN} \wt{V}(\wt{D} y(\xi)),
  \end{align*}
  where $e = (1, \dots, 1) \in \R^d$.

  Finally, point symmetry of $\wt\Rg$ and of $\wt{V}$ are obvious.
\end{proof}

\subsection{Symmetry of pair interaction energies}
Suppose that $\Ea$ is given by the pair interaction model
\eqref{eq:site_en_pair} for a $2$-lattice with two different species;
that is, the site energy $V:(\Real^d)^\Rg \rightarrow \Real$ is
defined by
\begin{equation}\label{def:pair site potential}
V(\ml{g}) = \frac{1}{2} \sum_{\ml{\rho} \in \Rg} \phi_{\alpha \beta} \b(|D_\ml{\rho} y(\xi) |\b).
\end{equation}
(Recall the convention that $\ml{\rho} = (\rho, \alpha, \beta)$.)  We
will now identify a symmetry in this model
that will play a role analogous to the operation $\neg$ in the previous section.

The key observation in this case is that, physically, we should
require $\phi_{\alpha\beta} = \phi_{\beta\alpha}$, which motivates the
operation $\simm : \mathbb{Z}^d \times \{0,1\} \times \{0,1\} \rightarrow
\mathbb{Z}^d \times \{0,1\} \times \{0,1\}$ by
\begin{equation}
  \label{eq:defn_sim}
  \simm (\rho; \alpha, \beta) := (-\rho; \beta, \alpha).
\end{equation}
We immediately obtain the following result.

\begin{proposition}[Symmetry of pair interaction site potentials]\label{lem:symm of pair site potential}
  Let the site energy $V$ be defined by \eqref{def:pair site
    potential} and suppose that $\Rg = \simm \Rg$ and
  $\phi_{\alpha\beta} = \phi_{\beta\alpha}$ for all $\alpha,\beta \in
  \{0,1\}$; then $V$ satisfies the point symmetry
  \begin{equation}
    \label{eq:invsymm_pair}
    V\b(\{g_{\ml{\rho}}\}_{\ml\rho\in\Rg}\b)
    = V\b(\{- g_{\simm\ml{\rho}}\}_{\ml\rho \in \Rg}\b)  \qquad
    \forall \{g_{\ml\rho}\}_{\ml\rho\in\R} \in (\R^d)^{\Rg}. 
  \end{equation}
\end{proposition}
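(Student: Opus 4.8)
The plan is to unwind the definition of $V$ in \eqref{def:pair site potential} and compare $V\b(\{g_{\ml\rho}\}\b)$ with $V\b(\{-g_{\simm\ml\rho}\}\b)$ term by term over $\ml\rho \in \Rg$. The right-hand side, by definition, is $\frac12 \sum_{\ml\rho\in\Rg} \phi_{\alpha\beta}\b(|{-g_{\simm\ml\rho}}|\b)$, where $\ml\rho = (\rho;\alpha,\beta)$. Since $|{-g_{\simm\ml\rho}}| = |g_{\simm\ml\rho}|$, this equals $\frac12\sum_{\ml\rho\in\Rg}\phi_{\alpha\beta}\b(|g_{\simm\ml\rho}|\b)$. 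Now I re-index the sum using the hypothesis $\Rg = \simm\Rg$: as $\ml\rho$ ranges over $\Rg$, so does $\ml\sigma := \simm\ml\rho$, and since $\simm$ is clearly an involution ($\simm\simm(\rho;\alpha,\beta) = \simm(-\rho;\beta,\alpha) = (\rho;\alpha,\beta)$), writing $\ml\sigma = (\sigma;\gamma,\delta)$ we have $\ml\rho = \simm\ml\sigma = (-\sigma;\delta,\gamma)$, so the species pair attached to $\ml\rho$ is $(\delta,\gamma)$. Hence $\frac12\sum_{\ml\rho\in\Rg}\phi_{\alpha\beta}\b(|g_{\simm\ml\rho}|\b) = \frac12\sum_{\ml\sigma\in\Rg}\phi_{\delta\gamma}\b(|g_{\ml\sigma}|\b)$.

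The final step invokes the symmetry hypothesis $\phi_{\gamma\delta} = \phi_{\delta\gamma}$: the re-indexed sum becomes $\frac12\sum_{\ml\sigma\in\Rg}\phi_{\gamma\delta}\b(|g_{\ml\sigma}|\b)$, which is exactly $V\b(\{g_{\ml\sigma}\}_{\ml\sigma\in\Rg}\b)$ by \eqref{def:pair site potential}. This establishes \eqref{eq:invsymm_pair}. I should double-check the subtle point that \eqref{def:pair site potential} implicitly treats $\phi_{\alpha\beta}$ as indexed by the species pair carried by $\ml\rho$, so that the re-indexing genuinely swaps $\phi_{\gamma\delta}$ for $\phi_{\delta\gamma}$ — that, together with the two stated hypotheses ($\simm$-invariance of $\Rg$ and symmetry of the potentials), is all that is needed.

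There is essentially no obstacle here: the proposition is an immediate bookkeeping consequence of the definitions, exactly analogous to Proposition~\ref{th:symm_1:refl} for the $\neg$ operation. The only thing requiring any care is keeping the species labels straight through the involution $\simm$, since $\simm$ simultaneously negates the finite-difference vector $\rho$ and transposes the species pair $(\alpha,\beta)\mapsto(\beta,\alpha)$; the whole point is that these two effects are precisely matched by $|{-g}| = |g|$ and $\phi_{\alpha\beta} = \phi_{\beta\alpha}$ respectively. I would present the argument as a short displayed chain of equalities rather than spelling out a case distinction, since no case distinction is actually needed once one observes $\simm$ is an involution and $\Rg=\simm\Rg$.
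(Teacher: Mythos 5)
Your proof is correct and follows essentially the same route as the paper's: unwind the definition \eqref{def:pair site potential}, use $|{-g}|=|g|$, re-index the sum via the involution $\simm$ using $\Rg=\simm\Rg$, and conclude with $\phi_{\alpha\beta}=\phi_{\beta\alpha}$. (Your version is in fact slightly more careful, as you retain the factor $\smfrac{1}{2}$ that the paper's displayed computation silently drops.)
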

\begin{proof}
  Under the stated assumptions,
  \begin{align*}
    V\b(\{-g_{\simm\ml{\rho}}\}_{\ml\rho\in\Rg}\b)
    &= \sum_{\ml\rho\in\Rg}
    \phi_{\alpha\beta}(|-g_{(-\rho;\beta,\alpha)}|) \\
    &= \sum_{\ml\rho\in\simm\Rg}
    \phi_{\beta\alpha}(|g_{(-\rho;\beta,\alpha)}|).
  \end{align*}
  Relabelling $\rho' = -\rho, \alpha' = \beta, \beta' = \alpha$ we
  obtain \eqref{eq:invsymm_pair}.
\end{proof}

\begin{remark}
  1. Unlike the symmetry $\neg$ of a \sstl, $\simm$ does not arise from
  any globally defined permutation of the atoms of the material.  For
  this reason, one cannot expect that an arbitrary total energy will
  be symmetric under $\simm$, and therefore one cannot hope to
  symmetrize general site potentials. 

  2. The symmetry $\simm$ can be immediately applied to $n$-lattice
  pair interactions for $n > 2$. All our subsequent results generalize
  as well.
\end{remark}

\section{Second order accuracy of the Cauchy--Born Energy}
\label{sec:2d order 2latt}
In this section, we give a new rule relating the atomistic and
continuum variables, and we show that under this rule, the
Cauchy--Born energy for single-species $2$-lattices is second order
accurate.  Given $1$-periodic deformation and shift fields $Y$ and
$P$, we first rescale them to atomic units
\begin{equation*}
  Y^N(x) := N Y \left ( \frac{x}{N} \right ) \mbox{\quad and \quad}
  P^N(x) := P \left ( \frac{x}{N} \right ).
\end{equation*}
We then define a corresponding atomistic deformation $y^N$ by
\begin{equation}
\label{eq:2order atc connection}
y^N_0(\xi) := Y^N(\xi) - \frac{1}{2} P^N(\xi) \mbox{\quad and \quad}
y^N_1(\xi) := Y^N(\xi) + \frac{1}{2} P^N(\xi).
\end{equation}
When~\eqref{eq:2order atc connection} is used to connect the atomistic
and continuum fields, we interpret $Y^N(\xi)$ as the \emph{deformation
  of the centroid} of the atoms at site $\xi$; indeed, the inverse
transformation of~\eqref{eq:2order atc connection} is
\begin{equation*}
  Y^N(\xi) := \frac{1}{2} \b\{y_0^N(\xi) + y_1^N(\xi) \b\} \mbox{\quad and \quad}
  P^N(\xi) := y_1^N(\xi) - y_0^N(\xi).
\end{equation*}

As in the Bravais lattice case, we need a symmetry of the partial
derivatives of $V$ \eqref{eq:symm of deriv 1lattice} to show second
order convergence. The following lemma establishes the appropriate
generalization.

\begin{lemma}
  \label{lemma:2latt symmetry of partial derivatives}
  Suppose that $V$ is point symmetric~\eqref{eq:defn_invsymm_1} with
  respect to the permutation operator $\neg$.  Let $\D_\ml{\rho}$ be
  defined by \eqref{eq:2latt_dirder}, and let $V_\ml{\rho} (\ml{g}) :=
  \frac{\partial}{\partial g_\ml{\rho}} V(\ml{g})$.  Then, for any
  deformation and shift fields $Y$ and $P$, we have
  \begin{equation}\label{eq:2latt symmetry of partial derivatives}
      V_{\ml{\rho}} \b(\D_\Rg (Y,P) (x)\b)
      = -V_{\neg \ml{\rho}}\b(\D_\Rg (Y,P) (x)\b).
  \end{equation}
\end{lemma}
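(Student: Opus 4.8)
The plan is to differentiate the point-symmetry identity \eqref{eq:defn_invsymm_1} for $V$ and then evaluate at the argument $\D_\Rg(Y,P)(x)$, exactly mirroring the Bravais-lattice argument that produced \eqref{eq:symm of deriv 1lattice}, but being careful about the chain rule because the map $\ml{g}\mapsto -\{g_{\neg\ml{\rho}}\}$ now permutes the $\Rg$-components (via $\neg$) as well as negating them. First I would fix $\ml{\sigma}=(\sigma;\alpha,\beta)\in\Rg$ and differentiate both sides of \eqref{eq:defn_invsymm_1} with respect to $g_{\ml{\sigma}}$. On the left this gives $V_{\ml{\sigma}}(\ml{g})$. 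On the right, writing $h(\ml{g}) := -\{g_{\neg\ml{\rho}}\}_{\ml{\rho}\in\Rg}$, the component of $h$ indexed by $\ml{\rho}$ is $-g_{\neg\ml{\rho}}$, which depends on $g_{\ml{\sigma}}$ precisely when $\neg\ml{\rho}=\ml{\sigma}$, i.e.\ when $\ml{\rho}=\neg\ml{\sigma}$ (using $\neg\neg=\id$), with derivative $-1$. Hence the chain rule yields $\frac{\partial}{\partial g_{\ml{\sigma}}} V(h(\ml{g})) = -V_{\neg\ml{\sigma}}(h(\ml{g}))$, so differentiating \eqref{eq:defn_invsymm_1} gives the pointwise identity
\begin{equation*}
  V_{\ml{\sigma}}(\ml{g}) = -\, V_{\neg\ml{\sigma}}\b(-\{g_{\neg\ml{\rho}}\}_{\ml{\rho}\in\Rg}\b)
  \qquad \forall \ml{g}\in(\R^d)^\Rg.
\end{equation*}

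The remaining step is to choose $\ml{g} = \D_\Rg(Y,P)(x)$ and check that the argument on the right-hand side, namely $-\{\D_{\neg\ml{\rho}}(Y,P)(x)\}_{\ml{\rho}\in\Rg}$, equals $\D_\Rg(Y,P)(x)$ itself. This is the $2$-lattice analogue of the identity $D_\rho y^F = -D_{-\rho} y^F$ used in the Bravais case, and it is essentially Proposition~\ref{th:symm_1:refl} rewritten for the continuum directional derivative $\D_\ml{\rho}$ from \eqref{eq:2latt_dirder}. Concretely, for $\ml{\rho}=(\rho;\alpha,\beta)$ one has $\neg\ml{\rho}=(-\rho;\neg\alpha,\neg\beta)$, so $\D_{\neg\ml{\rho}}(Y,P)(x) = \D_{-\rho}Y(x) + (\neg\beta-\neg\alpha)P(x) = -\D_\rho Y(x) - (\beta-\alpha)P(x)$, using the linearity $\D_{-\rho}Y = -\D_\rho Y$ for the continuum gradient-type derivative together with the arithmetic fact $\neg\beta - \neg\alpha = \alpha - \beta$ (checked by the same four-case distinction as in Proposition~\ref{th:symm_1:refl}). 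Therefore $-\D_{\neg\ml{\rho}}(Y,P)(x) = \D_\ml{\rho}(Y,P)(x)$ for every $\ml{\rho}\in\Rg$, i.e.\ $-\{\D_{\neg\ml{\rho}}(Y,P)(x)\}_{\ml{\rho}\in\Rg} = \D_\Rg(Y,P)(x)$.

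Substituting this back into the differentiated identity with $\ml{\sigma}=\ml{\rho}$ gives exactly \eqref{eq:2latt symmetry of partial derivatives}. I expect no genuine obstacle here; the only points requiring care are bookkeeping ones: (i) making sure the chain rule picks up the component-permutation $\ml{\rho}\mapsto\neg\ml{\rho}$ correctly and not just the sign, which is why the index shifts from $\ml{\sigma}$ to $\neg\ml{\sigma}$; (ii) justifying $\D_{-\rho}Y = -\D_\rho Y$ for whatever precise meaning $\D_\rho Y$ carries in \eqref{eq:2latt_dirder} — if $\D_\rho Y(x)$ denotes the directional derivative $\nabla Y(x)\rho$ this is immediate by linearity in $\rho$, and I would state it as such; and (iii) the elementary case check $\neg\beta-\neg\alpha=\alpha-\beta$, which is identical to the one already carried out in the proof of Proposition~\ref{th:symm_1:refl}. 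Since the identity holds pointwise in $\ml{g}$ before specializing, there is no regularity issue — $V\in C^1$ (indeed $C^3$) suffices — and the result holds for all $x$ without any periodicity or smoothness hypothesis on $(Y,P)$ beyond those needed to define $\D_\Rg(Y,P)$.
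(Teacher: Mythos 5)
Your proposal is correct and follows essentially the same route as the paper: differentiate the point-symmetry identity \eqref{eq:defn_invsymm_1} with respect to a fixed component (picking up the index permutation $\ml{\sigma}\mapsto\neg\ml{\sigma}$ and the sign from the chain rule), then evaluate at $\D_\Rg(Y,P)(x)$ and verify $-\D_{\neg\ml{\rho}}(Y,P)=\D_{\ml{\rho}}(Y,P)$ via $\neg\beta-\neg\alpha=\alpha-\beta$. The only difference is that you spell out the chain-rule bookkeeping more explicitly than the paper does, which is a point in your favour rather than a deviation.
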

\begin{proof}
  Differentiating \eqref{eq:defn_invsymm_1} and \eqref{eq:invsymm_pair}
  with respect to $g_{\ml\rho}$ yields
  \begin{displaymath}
    V_{\ml\rho}\b( \{ g_{\ml\rho} \}_{\ml\rho\in\Rg} \b) = -
    V_{\neg\ml\rho}\b( \{ -g_{\neg\ml\rho} \}_{\ml\rho\in\Rg} \b).
  \end{displaymath}
  Evaluating at $g_{\ml\rho} = \D_{\ml\rho} (Y,P)$ and noting that
  \begin{align*}
    -\D_{\neg\ml\rho} (Y,P) &= - \D_{-\rho} Y - (\neg\beta - \neg\alpha) P \\
    &= \D_{\rho} Y - \b( (1-\beta) -(1- \alpha) \b)P \\
    & = \D_{\rho} Y + (\beta - \alpha)P \\
    &= \D_{\ml\rho} (Y,P),
  \end{align*}
  we obtain \eqref{eq:2latt symmetry of partial derivatives}.
\end{proof}

We are now in a position to prove second order accuracy of the
Cauchy--Born model.

\begin{theorem}[Second order convergence for single species
  $2$-lattices]
  \label{thm:2d order sstl}
  Let $Y \in C^3(\R^d; \R^d)$ be a $1$-periodic deformation field and
  $P \in C^2(\R^d; \R^d)$ a $1$-periodic shift field.  Let $y^N$ be
  the scaled atomistic configuration defined by
  \eqref{eq:2order atc connection}.  Then
  \begin{equation*}
    \begin{split}
      \left |N^{-d} \Ea \b(y^N\b) - \Ec \b(Y,P \b) \right | \leq
      C N^{-2} \b \{ \,& \|\D^3 Y\|_{\infty} + \|\D^2
      P\|_{\infty} \\
      & \qquad + \|\D^2 Y\|^2_{\infty} + \|\D P\|^2_{\infty} \b \}.
    \end{split}
  \end{equation*}
  where the constant $C$ is a function of $\Rg$ and
  $\|V\|_{C^2}$.
\end{theorem}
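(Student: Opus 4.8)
The plan is to mimic the Bravais-lattice argument from Proposition~\ref{prop:error for simple cb}, but carried out in the $2$-lattice variables and using the connection~\eqref{eq:2order atc connection} together with the symmetry of partial derivatives established in Lemma~\ref{lemma:2latt symmetry of partial derivatives}. First I would split the error as in that proof: apply the midpoint quadrature rule (exact on affine functions) to write $N^{-d}\Ec(Y^N,P^N)$ as $N^{-d}\sum_{\xi\in\Omega_N} V(\D_\Rg(Y^N,P^N)(\xi))$ plus a remainder of size $O(\|\D^2 V(\D_\Rg(Y^N,P^N))\|_\infty)$, which after the chain rule and the scalings $\|\D^2 Y^N\|_\infty = N^{-1}\|\D^2 Y\|_\infty$, $\|\D P^N\|_\infty = N^{-1}\|\D P\|_\infty$ contributes at the $O(N^{-2})$ level (picking up the squared terms $\|\D^2 Y\|_\infty^2$, $\|\D P\|_\infty^2$ and the linear terms $\|\D^3 Y\|_\infty$, $\|\D^2 P\|_\infty$). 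So the crux is to Taylor-expand the integrand difference $V(D_\Rg y^N(\xi)) - V(\D_\Rg(Y^N,P^N)(\xi))$ to second order and show the first-order term sums to zero.

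The key computation is the expansion of the consistency error $e_\ml{\rho}(\xi) := D_\ml{\rho} y^N(\xi) - \D_\ml{\rho}(Y^N,P^N)(\xi)$ for each $\ml{\rho}=(\rho;\alpha,\beta)\in\Rg$. Using $y^N_\gamma(\xi) = Y^N(\xi) + (\gamma-\tfrac12)P^N(\xi)$ for $\gamma\in\{0,1\}$ (which is exactly~\eqref{eq:2order atc connection} written uniformly), we get
\begin{align*}
  e_\ml{\rho}(\xi) &= \b(Y^N(\xi+\rho) + (\beta-\tfrac12)P^N(\xi+\rho)\b)
  - \b(Y^N(\xi) + (\alpha-\tfrac12)P^N(\xi)\b) \\
  &\qquad - \D_\rho Y^N(\xi) - (\beta-\alpha)P^N(\xi) \\
  &= \b(Y^N(\xi+\rho) - Y^N(\xi) - \D_\rho Y^N(\xi)\b)
  + (\beta-\tfrac12)\b(P^N(\xi+\rho) - P^N(\xi)\b).
\end{align*}
By Taylor's theorem the $Y$-part is $\tfrac12 \D_\rho^2 Y^N(\xi) + O(\|\D^3 Y^N\|_\infty)$ and the $P$-part is $(\beta-\tfrac12)\D_\rho P^N(\xi) + O(\|\D^2 P^N\|_\infty)$, so $e_\ml{\rho}(\xi) = q_\ml{\rho}(\xi) + O(\|\D^3 Y^N\|_\infty + \|\D^2 P^N\|_\infty)$ with $q_\ml{\rho}(\xi) := \tfrac12 \D_\rho^2 Y^N(\xi) + (\beta-\tfrac12)\D_\rho P^N(\xi)$. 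Then, writing $V_\ml{\rho}(\xi) := V_\ml{\rho}(\D_\Rg(Y^N,P^N)(\xi))$, Taylor expansion of $V$ gives
\[
  V(D_\Rg y^N(\xi)) - V(\D_\Rg(Y^N,P^N)(\xi))
  = \sum_{\ml{\rho}\in\Rg} V_\ml{\rho}(\xi)\cdot q_\ml{\rho}(\xi)
  + O\b(\|V\|_{C^2}\{\|\D^2 Y^N\|_\infty^2 + \|\D P^N\|_\infty^2 + \|\D^3 Y^N\|_\infty + \|\D^2 P^N\|_\infty\}\b).
\]

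The main point — and where Lemma~\ref{lemma:2latt symmetry of partial derivatives} does its work — is that $\sum_{\ml{\rho}\in\Rg} V_\ml{\rho}(\xi)\cdot q_\ml{\rho}(\xi) = 0$. I would symmetrize the sum over $\ml{\rho}$ and $\neg\ml{\rho}$: since $\Rg = \neg\Rg$, the sum equals $\tfrac12\sum_{\ml{\rho}\in\Rg}\b(V_\ml{\rho}(\xi)\cdot q_\ml{\rho}(\xi) + V_{\neg\ml{\rho}}(\xi)\cdot q_{\neg\ml{\rho}}(\xi)\b)$. By Lemma~\ref{lemma:2latt symmetry of partial derivatives}, $V_{\neg\ml{\rho}}(\xi) = -V_\ml{\rho}(\xi)$, so it remains to check that $q_{\neg\ml{\rho}}(\xi) = q_\ml{\rho}(\xi)$. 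For $\neg\ml{\rho} = (-\rho; \neg\alpha, \neg\beta)$ we have $\D_{-\rho}^2 Y^N = \D_\rho^2 Y^N$ (second differences are even in the direction) and $\D_{-\rho} P^N = -\D_\rho P^N$, while $\neg\beta - \tfrac12 = (1-\beta)-\tfrac12 = -(\beta-\tfrac12)$; hence $(\neg\beta-\tfrac12)\D_{-\rho}P^N = (\beta-\tfrac12)\D_\rho P^N$, giving $q_{\neg\ml{\rho}}(\xi) = \tfrac12\D_\rho^2 Y^N(\xi) + (\beta-\tfrac12)\D_\rho P^N(\xi) = q_\ml{\rho}(\xi)$ as needed. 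Therefore each paired term reads $\tfrac12(V_\ml{\rho}\cdot q_\ml{\rho} - V_\ml{\rho}\cdot q_\ml{\rho}) = 0$ and the whole first-order sum vanishes pointwise in $\xi$. Combining this with the quadrature remainder and the Taylor remainder, summing over $\xi\in\Omega_N$ (which contributes a factor $N^d$ cancelling $N^{-d}$), and finally inserting the scalings $\|\D^3 Y^N\|_\infty = N^{-2}\|\D^3 Y\|_\infty$, $\|\D^2 P^N\|_\infty = N^{-2}\|\D^2 P\|_\infty$, $\|\D^2 Y^N\|_\infty^2 = N^{-2}\|\D^2 Y\|_\infty^2$, $\|\D P^N\|_\infty^2 = N^{-2}\|\D P\|_\infty^2$ yields the claimed $N^{-2}$ bound with $C = C(\Rg,\|V\|_{C^2})$.

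The only real obstacle is bookkeeping: one must be careful that the factor-of-$\tfrac12$ shift in~\eqref{eq:2order atc connection} is exactly what makes the coefficient $(\beta-\tfrac12)$ odd under $\neg$ (the naive connection~\eqref{eq:definition of yN} would give coefficient $\beta$, which is not odd, and the cancellation would fail — this is precisely why the ``centroid'' connection is needed). Everything else — the quadrature estimate, the two Taylor expansions, and the scaling identities — is routine and parallels Propositions~\ref{prop:convergence of cb nlattice} and~\ref{prop:error for simple cb} almost verbatim. I should also note, as in those proofs, that $\|\D^2 V(\D_\Rg(Y^N,P^N))\|_\infty$ in the quadrature remainder is bounded via the chain rule by $\|V\|_{C^2}$ times a polynomial in $\|\D^2 Y^N\|_\infty$, $\|\D P^N\|_\infty$, $\|\D^3 Y^N\|_\infty$, $\|\D^2 P^N\|_\infty$, all of which are $O(N^{-1})$ or smaller, so the remainder is genuinely $O(N^{-2})$.
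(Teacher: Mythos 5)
Your proposal is correct and follows essentially the same route as the paper's proof: midpoint quadrature, two Taylor expansions, the identification of the leading consistency term $q_{\ml\rho} = \smfrac12 \D_\rho^2 Y^N + (\beta - \smfrac12)\D_\rho P^N$, and its cancellation via $q_{\neg\ml\rho} = q_{\ml\rho}$ combined with $V_{\neg\ml\rho} = -V_{\ml\rho}$ from Lemma~\ref{lemma:2latt symmetry of partial derivatives}. Your closing observation about why the centroid connection (rather than \eqref{eq:definition of yN}) is essential for the parity of the coefficient under $\neg$ is exactly the point the paper makes in the remark following the theorem.
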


\begin{proof}
Since the midpoint rule is exact for affine functions, a standard quadrature estimate gives
\begin{align}
N^{-d} \Ec\b(Y^N, P^N) &= N^{-d} \int_{N\Omega} V\b(\D_\Rg \b(Y^N, P^N\b)\b) \nonumber \\
&= N^{-d} \sum_{\xi \in \Omega_N} V\b(\D_\Rg \b(Y^N, P^N\b)\b) \nonumber \\
&+ O \b (\b\| \D^2V\b(\D_\Rg \b(Y^N, P^N\b)\b )\b\|_\infty\b).
\label{eq:thm cb 2latt 1}
\end{align}
It is easy to see that
\begin{align*}
&\b\| \D^2V\b(\D_\Rg \b(Y^N, P^N\b)\b )\b\|_\infty \\
&\qquad \qquad
\leq C \b( \b\|\D^3 Y^N\b\|_{\infty} + \b\|\D^2 P^N\b\|_{\infty}
 + \b\|\D^2 Y^N\b\|^2_{\infty} + \b\|\D P^N\b\|^2_{\infty} \b).
\end{align*}

We now estimate the error in $V\b( \D_\Rg \b (Y^N,P^N \b)(\xi) \b)$.
By Taylor's theorem,
\begin{equation}
\begin{split}
V\b(D_\Rg y^N\b) - V\b(\D_\Rg \b(Y^N,P^N\b)\b)
&=
\sum_{\ml{\rho}\in \Rg} V_\ml{\rho}
\cdot \b\{D_{\ml\rho} y - \D_\ml{\rho} y \b\}  \\
&+O\B(\|V\|_{C^2} \b|D_{\ml\rho} y^N- \D_\ml{\rho} \b(Y^N, P^N\b)\b|^2\B),
\label{eq:thm cb 2latt 2}
\end{split}
\end{equation}
where all functions above are evaluated $\xi \in \Omega_N$,
and $V_\ml{\rho} := V_\ml{\rho}\b( \D_\Rg \b (Y^N,P^N \b)(\xi) \b)$.
By a second application of Taylor's theorem,
\begin{align}
D_{\ml\rho} y^N(\xi)- \D_\ml{\rho} \b(Y^N, P^N\b)(\xi) &= y^N_\beta(\xi + \rho) - y^N_\alpha(\xi)
\nonumber \\
&\quad - \D_\rho Y^N (\xi) - (\beta - \alpha) P^N(\xi)  \nonumber \\
&= Y^N(\xi + \rho) + \b(\beta - \smfrac12\b) P^N(\xi+\rho) \nonumber \\
&\quad - Y^N(\xi) - \b(\alpha - \smfrac12\b) P^N(\xi) \nonumber \\
&\quad - \D_\rho Y^N (\xi) - (\beta -\alpha) P^N(\xi) \nonumber \\
&= \smfrac12 \D_\rho^2 Y^N(\xi) + \b(\beta - \smfrac12 \b) \D_\rho P^N(\xi) \nonumber \\
&\quad +O \b(\b\|\D^3 Y^N \b\|_\infty + \b\|\D^2 P^N \b\|_\infty \b).
\label{eq:thm cb 2latt 3}
\end{align}
Substituting~\eqref{eq:thm cb 2latt 3} into~\eqref{eq:thm cb 2latt 2},
and assuming again that all functions are evaluated at $\xi \in \Omega_N$,
we obtain
\begin{align}
V\b(D_\Rg y^N\b) - V\b(\D_\Rg \b(Y^N,P^N\b)\b)
&= \sum_{\ml{\rho}\in \Rg} V_\ml{\rho} \cdot e (\ml{\rho})
\nonumber \\
&+O \b(\b\|\D^3 Y^N \b\|_\infty + \b\|\D^2 P^N \b\|_\infty \b),
\label{eq:thm cb 2latt 4}
\end{align}
where
\begin{equation*}
e(\ml{\rho}) := \smfrac12 \D_\rho^2 Y^N(\xi) + \b(\beta - \smfrac12 \b) \D_\rho P^N(\xi).
\end{equation*}

We now observe that
\begin{align}
e(\neg \ml{\rho}) &= \smfrac12 \D_{-\rho}^2 Y^N(\xi)
+\b(\neg \beta - \smfrac12\b) \D_{-\rho} P^N(\xi) \nonumber \\
&=\smfrac12 \D_{-\rho}^2 Y^N(\xi)+\b((1 - \beta) - \smfrac12\b) \D_{-\rho} P^N(\xi)
\nonumber \\
&=\smfrac12 \D_{\rho}^2 Y^N(\xi)+(\beta - \smfrac12) \D_{\rho} P^N(\xi)  \nonumber \\
&= e(\ml{\rho}),
\label{eq:thm cb 2latt 5}
\end{align}
from which we deduce that the the first term on the right-hand side
of~\eqref{eq:thm cb 2latt 4} vanishes:
using~\eqref{eq:thm cb 2latt 5}
and point symmetry of the derivatives~\eqref{eq:2latt symmetry of
  partial derivatives}, we obtain
\begin{align}
\sum_{\ml{\rho} \in \Rg} V_\ml{\rho}\cdot e(\ml\rho)
&= \frac{1}{2} \sum_{\ml{\rho} \in \Rg} V_\ml{\rho} \cdot e(\ml\rho) + V_\ml{\ast\rho} \cdot e(\ast\ml\rho) \nonumber \\
&=\frac{1}{2} \sum_{\ml{\rho} \in \Rg} V_\ml{\rho}
\cdot \b \{e(\ml\rho) - e(\ast\ml\rho) \b \} \nonumber \\
&= 0.
\label{eq:thm cb 2latt 5}
\end{align}
Finally, combining~\eqref{eq:thm cb 2latt 1}, \eqref{eq:thm cb 2latt 2}, \eqref{eq:thm cb 2latt 3}, ~\eqref{eq:thm cb 2latt 4}, and~\eqref{eq:thm cb 2latt 5} gives
\begin{equation*}
\begin{split}
  &\left |N^{-d} \Ea \b(y^N\b) - \Ec \b(Y,P \b) \right |  \\
  &\qquad \qquad \leq C \b( \b\|\D^3 Y^N\b\|_{\infty} + \b\|\D^2 P^N\b\|_{\infty}
 + \b\|\D^2 Y^N\b\|^2_{\infty} + \b\|\D P^N\b\|^2_{\infty} \b).
\end{split}
\end{equation*}
The result follows by rescaling
\begin{alignat*}{3}
\b\|\D^3 Y^N\b\|_{\infty} &= N^{-2} \b\|\D^3 Y\b\|_{\infty}, \quad
&\b\|\D^2 Y^N\b\|^2_{\infty} &= N^{-2}\b\|\D^2 Y\b\|^2_{\infty}, \\
\b\|\D^2 P^N\b\|_{\infty} &= N^{-2}\b\|\D^2 P\b\|_{\infty}, \mbox{\quad and \quad}
&\b\|\D P^N\b\|^2_{\infty} &=N^{-2} \b\|\D P\b\|^2_{\infty}.
\end{alignat*}
\end{proof}

\begin{remark}
  Suppose that we are in the multi-species $2$-lattice case, and must
  use the symmetry $\simm$ instead of $\neg$. In this case, we have
  \begin{align*}
    e(\simm\ml{\rho}) &= \smfrac12 \D_{-\rho}^2 Y^N(\xi)
    +\b(\alpha - \smfrac12\b) \D_{-\rho} P^N(\xi)  \\
    &=\smfrac12 \D_{\rho}^2 Y^N(\xi)+\b(\smfrac12 - \alpha \b) \D_{\rho} P^N(\xi)
    \neq e(\ml{\rho}),
  \end{align*}
  whenever $\alpha = \beta$. Thus, in our above proof the first order
  terms do not cancel and the Cauchy--Born model is only first order
  accurate in this case. We will show in the next section how a new
  strain energy density can be constructed that is second order
  accurate in this case.
\end{remark}

\section{A second order accurate stored energy density for pair
  interactions}
\label{sec:pair}
In this section, we show how the symmetry $\sim$ for
multi-lattice pair interactions can be exploited to derive a
``simple'' second order accurate stored energy density, which does
{\em not} coincide with the Cauchy--Born strain energy density.
To maintain a notation consistent with the rest of the paper,
we state all results only for $2$-lattices, but stress that they can be immediately
extended to general multi-lattices composed of more than two Bravais lattices.

Suppose we are again given $1$-periodic deformation and shift fields
$(Y, P)$. Let $\b(Y^N, P^N\b)$ and $y^N = \b(y_0^N, y_1^N\b)$ be defined as
in the classical Cauchy--Born setting in \eqref{eq:definition of yN}.

To exploit the symmetry $\simm$ defined in~\eqref{eq:defn_sim},
we define a different multi-lattice directional derivative.
If, for the moment, we interpret $y^N$ as a continuum field,
then we may define
\begin{displaymath}
  \D_{\ml\rho} y^N := \smfrac12 \D_\rho y_\alpha^N + \smfrac12 \D_\rho
  y^N_\beta + (y_\beta^N -y_\alpha^N ).
\end{displaymath}
We observe that $\D_{\sim\ml\rho} = - \D_{\ml\rho}$ for this definition.
Written in terms of $(Y^N, P^N)$, $\D_{\ml\rho}$ becomes
\begin{equation}
  \label{eq:pair:defn_mldirder_pair}
  \D_{\ml\rho}(Y^N,P^N) := \D_\rho Y^N + (\beta-\alpha) P^N +
  \smfrac{\alpha+\beta}{2} \D_\rho P^N.
\end{equation}

Note that, in contrast to the previous multi-lattice directional
derivative~\eqref{eq:2latt_dirder},
this is not a scale-invariant field.
Indeed, defining, $\eps := 1/N$ and rewriting the directional derivative
in terms of $(Y, P)$, we arrive at
\begin{displaymath}
  \D_{\ml\rho}^\eps (Y, P) :=\D_{\ml\rho} \b(Y^N, P^N\b) = \D_\rho Y + (\beta-\alpha) P +
  \smfrac{\alpha+\beta}{2} \eps \D_\rho P,
\end{displaymath}
which gives rise to the new stored energy density
\begin{displaymath}
  W_\eps(\D Y, P) := V\b( \D^\eps_\Rg (Y, P) \b).
\end{displaymath}

\begin{remark}
  The fact that $W_\eps$ employs shift gradients but not strain
  gradients makes the resulting model straightforward to implement
  using $C^0$-conforming discretizations. By contrast, the
  higher-order models derived, e.g., in \cite{ArGr:2005} require also
  higher-order conforming discretizations.
\end{remark}

The symmetry $\D_{\sim\ml\rho} = - \D_{\ml\rho}$ was the first key
ingredient in the proof of of second order consistency of the
single-species $2$-lattice strain energy density.  We now generalize
the second key ingredient, the symmetry of partial derivatives of $V$.

\begin{lemma}
  Suppose that $V$ is point symmetric~\eqref{eq:invsymm_pair} with
  respect to the permutation operator $\simm$.  Let $\D_\ml{\rho}$ be
  defined by \eqref{eq:2latt_dirder}, and let $V_\ml{\rho} (\ml{g}) :=
  \frac{\partial}{\partial g_\ml{\rho}} V(\ml{g})$.  Then, for any
  deformation and shift fields $Y$ and $P$, we have
  \begin{equation}\label{eq:pair-2latt symmetry of partial derivatives}
      V_{\ml{\rho}} \b(\D_\Rg (Y^N,P^N) (x)\b)
      = -V_{\simm \ml{\rho}}\b(\D_\Rg (Y^N,P^N) (x)\b).
  \end{equation}
\end{lemma}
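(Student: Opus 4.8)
The plan is to mimic the proof of Lemma~\ref{lemma:2latt symmetry of partial derivatives}, replacing the operator $\neg$ with $\simm$ and the directional derivative $\D_\ml{\rho}(Y,P)$ with $\D_\ml{\rho}(Y^N,P^N)$ as defined in \eqref{eq:pair:defn_mldirder_pair}. First I would differentiate the point symmetry identity \eqref{eq:invsymm_pair} with respect to $g_\ml{\rho}$. Since \eqref{eq:invsymm_pair} reads $V(\{g_\ml{\rho}\}_{\ml\rho\in\Rg}) = V(\{-g_{\simm\ml{\rho}}\}_{\ml\rho\in\Rg})$, the chain rule gives
\begin{displaymath}
  V_\ml{\rho}\b(\{g_\ml{\rho}\}_{\ml\rho\in\Rg}\b) = - V_{\simm\ml{\rho}}\b(\{-g_{\simm\ml{\rho}}\}_{\ml\rho\in\Rg}\b),
\end{displaymath}
exactly as in the previous lemma (the combinatorial bijection $\ml\rho\mapsto\simm\ml\rho$ on $\Rg$ is what makes the relabelling valid, and this is guaranteed by the hypothesis $\Rg = \simm\Rg$ inherited from Proposition~\ref{lem:symm of pair site potential}).

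Next I would evaluate this identity at the argument $g_\ml{\rho} = \D_\ml{\rho}(Y^N,P^N)$. The crux is then to verify the algebraic identity
\begin{displaymath}
  -\D_{\simm\ml{\rho}}(Y^N,P^N) = \D_\ml{\rho}(Y^N,P^N) \qquad \forall \ml{\rho}\in\Rg,
\end{displaymath}
which is precisely the observation $\D_{\simm\ml\rho} = -\D_\ml{\rho}$ noted in the text right after \eqref{eq:pair:defn_mldirder_pair}. I would prove it by direct computation: writing $\ml\rho = (\rho;\alpha,\beta)$, so that $\simm\ml\rho = (-\rho;\beta,\alpha)$, one has from \eqref{eq:pair:defn_mldirder_pair}
\begin{displaymath}
  \D_{\simm\ml\rho}(Y^N,P^N) = \D_{-\rho} Y^N + (\alpha-\beta) P^N + \smfrac{\alpha+\beta}{2}\D_{-\rho} P^N,
\end{displaymath}
and using $\D_{-\rho} Y^N = -\D_\rho Y^N$ and $\D_{-\rho} P^N = -\D_\rho P^N$ (valid for the continuum directional derivatives, or equivalently for the interpretation of $y^N$ as a continuum field) this equals $-\D_\rho Y^N + (\alpha-\beta)P^N - \smfrac{\alpha+\beta}{2}\D_\rho P^N = -\D_\ml{\rho}(Y^N,P^N)$. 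Combining this with the differentiated symmetry relation yields \eqref{eq:pair-2latt symmetry of partial derivatives}.

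I do not anticipate a serious obstacle here; the argument is formally identical to Lemma~\ref{lemma:2latt symmetry of partial derivatives}. The one point requiring slight care is the sign bookkeeping in the directional-derivative identity $\D_{\simm\ml\rho} = -\D_\ml{\rho}$: unlike the $\neg$ case, where the cancellation came from $(1-\beta)-(1-\alpha) = -(\beta-\alpha)$ in the pure shift term, here the shift-gradient term $\smfrac{\alpha+\beta}{2}\D_\rho P^N$ is manifestly symmetric under $\alpha\leftrightarrow\beta$ but picks up its sign change purely from $\D_{-\rho}P^N = -\D_\rho P^N$, so one must be sure to use the continuum (odd) directional derivative rather than a finite difference. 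Once that identity is in hand, no further estimates are needed — the statement is an exact algebraic identity, not an asymptotic bound.
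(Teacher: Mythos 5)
Your proof is correct and fills in exactly the argument the paper intends, which simply defers to the proof of Lemma~\ref{lemma:2latt symmetry of partial derivatives}: differentiate the symmetry \eqref{eq:invsymm_pair} with respect to $g_{\ml\rho}$ (using $\Rg=\simm\Rg$ and that $\simm$ is an involution), evaluate at the directional derivatives, and invoke $\D_{\simm\ml\rho}=-\D_{\ml\rho}$. The only point worth flagging is that the lemma as stated cites \eqref{eq:2latt_dirder} rather than \eqref{eq:pair:defn_mldirder_pair} for $\D_{\ml\rho}$ --- apparently a typo --- but your computation goes through for either definition, since the sign flip of the extra term $\smfrac{\alpha+\beta}{2}\D_\rho P^N$ comes from $\D_{-\rho}P^N=-\D_\rho P^N$ exactly as you observe.
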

\begin{proof}
  The proof closely follows the proof of Lemma
  \ref{lemma:2latt symmetry of partial derivatives}.
\end{proof}

\begin{theorem}[Second order convergence for $2$-lattices]
  \label{thm:pair:2d order sstl}
  Let $Y \in C^3(\R^d; \R^d)$ be a $1$-periodic deformation field and
  $P \in C^2(\R^d; \R^d)$ a $1$-periodic shift field.  Let $y^N$ be
  the scaled atomistic configuration defined by \eqref{eq:definition
    of yN}; then
  \begin{equation*}
    \begin{split}
      N^{-d} \left | \Ea \b(y^N\b) - \Ec \b(Y^N,P^N \b) \right | \leq
      C N^{-2} \b \{ \, &\|\D^3 Y\|_{\infty} + \|\D^3 P\|_{\infty} \\
      & \quad+ \|\D^2 P\|_{\infty}  + \|\D^2 Y\|^2_{\infty} + \|\D P\|^2_{\infty} \b \}.
    \end{split}
  \end{equation*}
  where the constant $C$ is a function of $\Rg$ and
  $\|V\|_{C^2}$.
\end{theorem}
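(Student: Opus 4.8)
The plan is to mirror the proof of Theorem~\ref{thm:2d order sstl} almost line for line, with the single-species directional derivative $\D_\ml\rho(Y,P)$ replaced throughout by the pair-interaction derivative $\D_\ml\rho^\eps(Y,P)$ from~\eqref{eq:pair:defn_mldirder_pair}, and the symmetry operator $\neg$ replaced by $\simm$. First I would apply the midpoint quadrature estimate (exact for affine integrands) to write $N^{-d}\Ec(Y^N,P^N) = N^{-d}\sum_{\xi\in\Omega_N} V\b(\D_\Rg^\eps(Y^N,P^N)(\xi)\b) + O\b(\|\D^2 V(\D_\Rg^\eps(Y^N,P^N))\|_\infty\b)$, where here $\Ec$ is understood as the integral of the new density $W_\eps$; the remainder is then bounded by $C\b(\|\D^3 Y^N\|_\infty + \|\D^3 P^N\|_\infty + \|\D^2 P^N\|_\infty + \|\D^2 Y^N\|_\infty^2 + \|\D P^N\|_\infty^2\b)$, where the extra $\|\D^3 P^N\|_\infty$ term (compared to the single-species case) enters precisely because $\D_\ml\rho^\eps$ now contains the shift-gradient term $\smfrac{\alpha+\beta}{2}\D_\rho P^N$, whose second derivative involves $\D^3 P^N$.

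Next I would Taylor-expand $V\b(D_\Rg y^N(\xi)\b) - V\b(\D_\Rg^\eps(Y^N,P^N)(\xi)\b)$ to first order, getting the linear term $\sum_{\ml\rho\in\Rg} V_\ml\rho \cdot \b\{D_\ml\rho y^N - \D_\ml\rho^\eps(Y^N,P^N)\b\}$ plus a quadratic remainder of size $O\b(\|V\|_{C^2}|D_\ml\rho y^N - \D_\ml\rho^\eps(Y^N,P^N)|^2\b)$, with $V_\ml\rho := V_\ml\rho\b(\D_\Rg^\eps(Y^N,P^N)(\xi)\b)$. Then a second Taylor expansion of the finite difference: using $y_0^N = Y^N - \smfrac12 P^N$ is \emph{not} what we want here — note that in this theorem $y^N$ is the \emph{classical} connection~\eqref{eq:definition of yN}, so $y_0^N(\xi) = Y^N(\xi)$ and $y_1^N(\xi) = Y^N(\xi) + P^N(\xi)$. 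Expanding $D_\ml\rho y^N(\xi) = y_\beta^N(\xi+\rho) - y_\alpha^N(\xi)$ and subtracting $\D_\ml\rho^\eps(Y^N,P^N)(\xi) = \D_\rho Y^N + (\beta-\alpha)P^N + \smfrac{\alpha+\beta}{2}\D_\rho P^N$, the zeroth and first order terms should cancel by design of the directional derivative, leaving $e(\ml\rho) := \smfrac12 \D_\rho^2 Y^N(\xi) + c_{\alpha\beta}\,\D_\rho^2 P^N(\xi) + (\text{something})\cdot\D_\rho P^N$-type second-order quantities, all of size $O\b(\|\D^3 Y^N\|_\infty + \|\D^3 P^N\|_\infty + \|\D^2 P^N\|_\infty\b)$; I would compute the exact coefficients but they are not needed for the estimate.

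The crucial cancellation step is then to show $\sum_{\ml\rho\in\Rg} V_\ml\rho\cdot e(\ml\rho) = 0$. As in the single-species case this requires two facts: (i) the symmetry of derivatives $V_\ml\rho\b(\D_\Rg^\eps(Y^N,P^N)\b) = -V_{\simm\ml\rho}\b(\D_\Rg^\eps(Y^N,P^N)\b)$, which is exactly Lemma~\eqref{eq:pair-2latt symmetry of partial derivatives} (note $\D_\Rg^\eps(Y^N,P^N) = \D_\Rg(Y^N,P^N)$ in the sense the lemma needs, since at each point the argument is a homogeneous-state tuple up to the correction, and one checks $-\D_{\simm\ml\rho}^\eps = \D_\ml\rho^\eps$ directly from~\eqref{eq:pair:defn_mldirder_pair}); and (ii) the invariance $e(\simm\ml\rho) = e(\ml\rho)$, which I would verify by the same case-distinction computation as in~\eqref{eq:thm cb 2latt 5}, using $\simm(\rho;\alpha,\beta) = (-\rho;\beta,\alpha)$ so that $\D_{-\rho}^2 Y^N = \D_\rho^2 Y^N$, $\D_{-\rho}^2 P^N = \D_\rho^2 P^N$, and the roles of $\alpha,\beta$ swap symmetrically in the coefficients. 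Combining (i) and (ii), $\sum V_\ml\rho\cdot e(\ml\rho) = \smfrac12\sum V_\ml\rho\cdot\b\{e(\ml\rho) - e(\simm\ml\rho)\b\} = 0$. Finally I would collect the quadratic remainder $O\b(\|\D^2 Y^N\|_\infty^2 + \|\D P^N\|_\infty^2\b)$ and the quadrature remainder, sum over $\xi$, divide by $N^d$, and rescale via $\|\D^k Y^N\|_\infty = N^{-(k-1)}\|\D^k Y\|_\infty$ and $\|\D^k P^N\|_\infty = N^{-k}\|\D^k P\|_\infty$ to land on the stated $N^{-2}$ bound. The main obstacle — really the only non-routine point — is confirming that the first-order terms in the finite-difference expansion cancel exactly against $\D_\ml\rho^\eps$ \emph{and} that the leftover second-order expression $e(\ml\rho)$ genuinely satisfies $e(\simm\ml\rho) = e(\ml\rho)$; if the coefficient of the $\D_\rho P^N$ term fails to be $\simm$-symmetric, the argument breaks, so that coefficient must be tracked carefully through the Taylor expansion.
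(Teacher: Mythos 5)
Your proposal is correct and follows essentially the same route as the paper, which likewise reduces the argument to the proof of Theorem~\ref{thm:2d order sstl} with $\neg$ replaced by $\simm$, the directional derivative replaced by $\D^\eps_{\ml\rho}$, and the extra $\|\D^3 P\|_\infty$ term arising from the quadrature estimate exactly as you describe. The one computation you defer does work out: the leading remainder is $e(\ml\rho) = \smfrac{\beta-\alpha}{2}\D_\rho P^N + \smfrac12 \D_\rho^2 Y^N$, whose $\D_\rho P^N$-coefficient is antisymmetric under $\alpha\leftrightarrow\beta$, so that $e(\simm\ml\rho)=e(\ml\rho)$ and the first-order terms cancel as required.
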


\begin{proof}
  The proof of the result is the same as the proof of Theorem
  \ref{thm:2d order sstl} up to \eqref{eq:thm cb 2latt 2},
  except that we need $P\in C^3$ for the quadrature estimate,
  since $\D P$ now enters the definition of the multi-lattice directional derivative.
  A computation analogous to \eqref{eq:thm cb 2latt 3} yields
  \begin{align*}
    D_{\ml\rho} y^N - \D_{\ml\rho}(Y^N,P^N) &=
    \smfrac{\beta-\alpha}{2} \D_\rho P^N + \smfrac{1}{2} \D_\rho^2 Y^N
    \\
    & \qquad + O\b( \| \D^3 Y^N \|_{\infty} + \| \D^2 P^N \|_\infty \b).
  \end{align*}
  Continuing as in the proof of Theorem \ref{thm:2d order sstl},
   we obtain
  \begin{align}
    V\b(D_\Rg y^N\b) - V\b(\D_\Rg \b(Y^N,P^N\b)\b)
    &= \sum_{\ml{\rho}\in \Rg} V_\ml{\rho} \cdot e (\ml{\rho})
    \nonumber \\
    &+O \b(\b\|\D^3 Y^N \b\|_\infty + \b\|\D^2 P^N \b\|_\infty \b),
    \label{eq:pair:thm cb 2latt 4}
  \end{align}
  where
  \begin{equation*}
    e(\ml{\rho}) := \smfrac{\beta-\alpha}{2} \D_\rho P^N + \smfrac{1}{2} \D_\rho^2 Y^N.
  \end{equation*}
  It is straightforward to see that $e(\simm\ml\rho) =
  e(\ml\rho)$, and hence the rest of the proof follows Theorem~\ref{thm:2d order sstl}.
\end{proof}

\section{Conclusion}

We have identified two new second order continuum models for multi-lattices.
Our approach is based on the identification of symmetries similar
to the point symmetry of Bravais lattices.
We then extend the standard proof of second order accuracy of
the Cauchy--Born rule for Bravais lattices to derive second order models for multi-lattices.

For single-species $2$-lattices,
we show that the classical Cauchy--Born model is of second order in the continuum limit,
\emph{provided} that the atomistic and continuum kinematic variables are related in a new way.
If the classical relationship between the variables is adopted,
then the Cauchy--Born model is only first order.
We also give a new stored energy density for a general multi-lattice modeled using pair interactions,
and we show that this energy is second order accurate, as well.

These results are being used to develop accurate atomistic/continuum couplings.
We show in~\cite{VKL:2011} that the interfacial error of a coupling can be dramatically
reduced when a point symmetric site energy is used,
and we make similar applications of the results of this paper in~\cite{OrVK:blend2}.
We also remark that our methods achieve second order accuracy without requiring the use
of $C^1$-conforming elements,
and so our models are easier to implement than the second order models of~\cite{ArGr:2005}.
\bibliographystyle{abbrv}

\end{document}